\documentclass[a4paper,10pt]{scrartcl}
\usepackage[utf8]{inputenc}
\usepackage[english]{babel}
\usepackage[fixlanguage]{babelbib}
\usepackage{cite}
\usepackage{amssymb, amsmath, amstext, amsopn, amsthm, amscd, amsxtra, amsfonts}
\usepackage{yfonts, mathrsfs}
\usepackage{paralist}
\usepackage{bbm}

\usepackage{todonotes}

\usepackage{enumerate}

\usepackage[all]{xy}

\usepackage[T1]{fontenc}
\usepackage{ifthen}
\usepackage{colonequals}

\usepackage[bookmarks,bookmarksnumbered,plainpages=false]{hyperref}
\newtheoremstyle{standard}
{16pt}  
{16pt}  
{}  
{}  
{\bfseries}
{}  
{ } 
{{\thmname{#1~}}{\thmnumber{#2.}}\thmnote{~(#3)}} 

\newtheoremstyle{Lthm}
{\topsep}
{\topsep}
{\slshape}
{}
{\bfseries}
{}
{ }
{\thmname{#1}\thmnumber{ #2}\thmnote{  (#3).}}
\newtheoremstyle{kursiv}
{16pt}  
{16pt}  
{\itshape}  
{}  
{\bfseries}
{}  
{ } 
{{\thmname{#1~}}{\thmnumber{#2.}}\thmnote{~(#3)}}

\theoremstyle{kursiv}
\newtheorem{theorem}{Theorem}[section]
\newtheorem{lemma}[theorem]{Lemma}
\newtheorem{proposition}[theorem]{Proposition}
\newtheorem{corollary}[theorem]{Corollary}

\theoremstyle{Lthm}
\newtheorem{Ltheorem}{Theorem}

\theoremstyle{definition}
\newtheorem{definition}[theorem]{Definition}
\newtheorem{remark}[theorem]{Remark}
\newtheorem{example}[theorem]{Example}
\newtheorem{notation}[theorem]{Notation}  
\newtheorem{question}{Question}

\renewcommand{\phi}{\varphi}

\setdefaultenum{\normalfont (a)}{i.}{A.}{1.}

\renewcommand{\epsilon}{\varepsilon}

\DeclareSymbolFont{bbold}{U}{bbold}{m}{n}
\DeclareSymbolFontAlphabet{\mathbbold}{bbold}

\title{On the Topology of J-Groups} 
\author{Rafael Dahmen\footnote{Karlsruhe Institut für Technologie (KIT), Karlsruhe, Germany. 
\href{mailto:rafael.dahmen@kit.edu}{rafael.dahmen@kit.edu}, ORCID: \href{https://orcid.org/0000-0002-7927-8931}{0000-0002-7927-8931}}
\date{September 9, 2022}
}

\begin{document}
  
  \maketitle
  
  \begin{abstract}
    We introduce the concept of a topological J-group and determine for many important examples of topological groups if they are topological J-groups or not. 
    Besides other results, we show that the underlying topological space of a pathwise connected topological J-group is weakly contractible which is a strong and unexpected obstruction that depends only on the homotopy type of the underlying space.
  \end{abstract}
  
  \medskip
  
  \textbf{Keywords:} topological group, J-group, homotopy group, compact group, Lie group
  
  \medskip
  
  \textbf{MSC2020:} 22A05 (primary); 
  57T20, 
  22C05 
  (secondary)
  
  \medskip
  
  \textbf{Statements and Declarations:}
  This research received no external funding. The author declares no conflict of interest.

 \section{Introduction and statement of the results}
  A \emph{J-group} $(G,w,f)$ is a group $G$, together
  with a group element $w\in G$ (called the \emph{witness})
  and a self-map $ f \colon G \rightarrow G$ such that for all $x\in G$ one has
  \[
     f(x\cdot w) = f(x)\cdot x\,.
  \]
  This property\footnote{In fact, they use the definition $f(x\cdot w)=x\cdot f(x)$ but for (abstract) groups this is equivalent to the definition given here (by \cite[Remark 2.5]{JGroups}).} appeared first in \cite{BoykettWendt} under the name \emph{Property X} where it arose naturally in the study of the Jacobson radical of certain nearrings that are related to state machines.
  
  The name \emph{J-group} was introduced in \cite{JGroups} where the authors investigate this property more closely.
  They find necessary and sufficient conditions
  for a given group $G$ to have a J-group structure. This turns out to be a nontrivial task, involving surprisingly demanding combinatorial and number theoretical aspects.
  
  In this article we will transfer the purely algebraic idea of J-groups to the topological category by requiring the map $f$ to be continuous (see Definition \ref{definition_topological_j_group}).
  There may not be an obvious reason to do so, but it turns out that (although there are some easy examples of topological J-groups, like $( \mathbb{R},+)$ (see Lemma  \ref{lemma_j_field})), the question whether a given topological group
  admits the topological analog of a J-group structure is an interesting one, combining techniques from topological algebra, number theory, algebraic topology and functional analysis.
  
  Possible applications of this concept to topological nearrings and continuous state machines are not part of this article, but seem to be an interesting and promising question for a future work.
  
  Our main result is the following surprising theorem on pathwise connected topological J-groups:
  \begin{Ltheorem}[Homotopy of J-Groups]\ \\            \label{thm_HOMOTOPY}
    A pathwise connected topological J-group is weakly contractible, i.e. all homotopy groups are trivial.
  \end{Ltheorem}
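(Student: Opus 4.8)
The plan is to show directly that every homotopy group $\pi_n(G,e)$ vanishes, by playing the functional equation off against the two structural facts that hold because a topological group is an H-space. First I would normalize the data: replacing $f$ by $x\mapsto f(e)^{-1}\cdot f(x)$ leaves the identity $f(x\cdot w)=f(x)\cdot x$ intact, so I may assume $f(e)=e$, and then evaluating at $x=e$ gives $f(w)=e$. Path-connectedness already makes $\pi_0(G)$ trivial, so it remains to treat $n\ge 1$.

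Next I would record the two facts about a path-connected topological group $G$ that drive everything, both standard consequences of $G$ being an H-space with unit $e$: \emph{(i)} for $n\ge 1$ the group $\pi_n(G,e)$ is abelian, and the operation it carries coincides with the one induced by the \emph{pointwise} product of based maps $(S^n,*)\to(G,e)$ (the Eckmann--Hilton argument); and \emph{(ii)} $G$ is a simple space, i.e. the action of $\pi_1(G,e)$ on each $\pi_n(G,e)$ is trivial.

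The heart of the argument is to evaluate the based self-map $\Psi\colon G\to G$, $\Psi(x):=f(x)\cdot x$, on homotopy in two different ways; note $\Psi(e)=f(e)\cdot e=e$, so $\Psi_*$ is defined on each $\pi_n(G,e)$. On the one hand, the functional equation reads $\Psi=f\circ R_w$ with $R_w(x)=x\cdot w$ the right translation. Choosing (by path-connectedness) a path $\omega$ from $e$ to $w$, the map $(x,t)\mapsto x\cdot\omega(t)$ is a free homotopy from $\mathrm{id}_G$ to $R_w$, and composing it with $f$ yields a free homotopy from $f$ to $\Psi$. Both $f$ and $\Psi$ are based at $e$, and the basepoint track $t\mapsto f(\omega(t))$ is a \emph{loop} at $e$ because $f(e)=f(w)=e$; by the simple-space property \emph{(ii)} this loop acts trivially, so $\Psi_*=f_*$ on every $\pi_n(G,e)$. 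On the other hand, the formula $\Psi(x)=f(x)\cdot x$ exhibits $\Psi$ as the pointwise product of the based maps $f$ and $\mathrm{id}_G$, so by \emph{(i)} one gets $\Psi_*=f_*+\mathrm{id}$ additively on $\pi_n(G,e)$. Comparing the two computations gives $f_*=f_*+\mathrm{id}$, hence $\mathrm{id}=0$ on $\pi_n(G,e)$, so $\pi_n(G,e)=0$ for all $n\ge 1$; together with path-connectedness this is exactly weak contractibility.

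I expect the main obstacle to be the bookkeeping in this two-ways computation, precisely the claim that the free homotopy obtained from $\mathrm{id}_G\simeq R_w$ induces the \emph{same} map on $\pi_n$ as $f$ itself. This is where the group (H-space) hypothesis is indispensable: without triviality of the $\pi_1$-action the basepoint-track loop $f\circ\omega$ could a priori twist $f_*$ by a nontrivial automorphism and the cancellation would break. The complementary ingredient, that pointwise multiplication computes the sum in $\pi_n$, is the classical Eckmann--Hilton fact and should be cited rather than reproved.
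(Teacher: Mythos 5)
Your proof is correct, and it takes a genuinely different route from the paper's. The paper argues by induction on $n$, with the statement ``every path-connected topological J-group has trivial $\pi_n$'' as the induction hypothesis: the key lemma (Proposition \ref{proposition_mapping_J_groups}) is that the mapping group $C(D,G)$ with the compact-open topology is again a topological J-group, and applying this to the free loop group $C(\T,G)$ (path-connected when $G$ is), together with the splitting $C(\T,G)\cong \Omega(G)\rtimes G$, identifies $\pi_{n+1}(G)=\pi_n(\Omega(G))$ with a direct factor of $\pi_n(C(\T,G))$, which vanishes by the induction hypothesis applied to the J-group $C(\T,G)$. You instead work on each $\pi_n(G,e)$ directly: after the (legitimate) normalization $f(e)=e$, you read the J-equation as saying that $f\circ R_w$ equals the pointwise product $f\cdot \mathrm{id}_G$, where $R_w$ is right translation by $w$; the homotopy $R_w\simeq \mathrm{id}_G$ along a path $\omega$ from $e$ to $w$ has basepoint track $f\circ\omega$, which is a \emph{loop} precisely because $f(w)=f(e)=e$, so simplicity of path-connected H-spaces gives $(f\circ R_w)_*=f_*$, while Eckmann--Hilton gives $(f\cdot\mathrm{id}_G)_*=f_*+\mathrm{id}$; cancellation then forces $\pi_n(G,e)=0$. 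Both of your topological inputs are standard and correctly applied, and you correctly flag the one subtle point (the track must be a loop and must act trivially). As for what each approach buys: your argument is shorter, needs no induction and no loop-space formalism, and isolates the conceptual mechanism --- the J-equation makes $f$ absorb a translation, but translations act as $+\,\mathrm{id}$ on homotopy. The paper's heavier machinery pays off elsewhere: Proposition \ref{proposition_mapping_J_groups} and the iterated loop-group bookkeeping are reused to prove Theorem \ref{thm_odd_homotopy_groups}, which concerns groups with finitely many (possibly several) path-components; your argument requires $w$ to be joined to $e$ by a path, hence path-connectedness throughout, and does not directly extend to that setting.
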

  Note that this gives a necessary condition for a topological group
  to carry a topological J-group structure that depends only on the homotopy type of the underlying topological space, ruling out many classes of topological groups, like $ \mathrm{GL}_{n}\!\left(   \mathbb{C}    \right)$, $ \mathrm{SL}_{n}\!\left(   \mathbb{R}    \right)$, $ \mathrm{SU}_{n}\!\left(   \mathbb{C}    \right),  \mathrm{U}_{n}\!\left(   \mathbb{C}    \right), \ldots$.
  
  The converse of Theorem \ref{thm_HOMOTOPY} is false, i.e. there are weakly contractible topological groups which are not topological J-groups (see Example \ref{example_boolean_group}).
  For connected Lie Groups, as was pointed out to the author by Karl-Hermann Neeb, the converse turns out to be correct, see Corollary \ref{corollary_contractible_lie_groups}.

  In the class of compact groups we can show the following necessary criterion for being a topological J-group which does not depend on the condition of pathwise connectedness:
  \begin{Ltheorem}[Compact Abelian J-groups] \ \\               \label{thm_COMPACT}
    Every compact abelian metrizable topological J-group is profinite, i.e. totally disconnected.
  \end{Ltheorem}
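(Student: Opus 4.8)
The plan is to prove the contrapositive: if a compact abelian metrizable group $G$ is not totally disconnected, then it cannot carry a topological J-group structure. Since $G$ is compact abelian, its connected component of the identity $G_0$ is a compact connected abelian group, and the hypothesis says $G_0 \neq \{e\}$. The key structural input I would invoke is Pontryagin duality together with the structure theory of compact connected abelian groups: a compact connected abelian metrizable group admits a continuous surjective homomorphism onto the circle $\T$, or, dually, its character group is a nontrivial torsion-free discrete abelian group. Concretely, metrizability makes the dual countable, and nontriviality of $G_0$ forces a nontrivial torsion-free quotient of the dual, hence a continuous surjection $\pi\colon G \twoheadrightarrow \T$ onto the one-dimensional torus.

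Once I have such a surjection $\pi$, the strategy is to push the J-group structure forward and derive a contradiction from Theorem~\ref{thm_HOMOTOPY}, or more directly from the defining equation. Writing $(G,w,f)$ for the hypothetical J-group structure, I would examine the equation $f(x\cdot w) = f(x)\cdot x$ and try to project it through $\pi$. The obstacle is that $\pi\circ f$ need not factor through $\pi$, so the cleanest route is instead to restrict attention to the connected component $G_0$ itself. Here I expect the real engine to be Theorem~\ref{thm_HOMOTOPY}: $G_0$ is a pathwise connected compact group, and if I can show $G_0$ inherits (or is forced to interact with) a J-group structure, then $G_0$ would have to be weakly contractible, contradicting $\pi_1(\T)\cong\Z \neq 0$ via the surjection onto $\T$.

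The subtle point, and the step I expect to be the main obstacle, is that a subgroup or a connected component of a topological J-group need not itself be a topological J-group — the witness $w$ and the map $f$ need not respect $G_0$. So I would argue more carefully at the level of the whole group. The natural move is to consider the iterates of $f$ and the way the defining relation $f(x w)=f(x)x$ propagates translation by $w$ into a controlled ``winding'' behavior, and to transport this to $\T$ via $\pi$. Composing with $\pi$, I would study the continuous self-map of $G$ given by $f$ and analyze the induced map on $\pi_1$ or on the winding number along the coset of the closure of $\langle w\rangle$; the equation forces a fixed-point or degree condition on $\T$ that a continuous circle-valued map cannot satisfy unless the circle factor degenerates. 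Making this degree/winding argument rigorous on a compact connected abelian group — reducing to $\T$ and counting how the relation $f(xw)=f(x)x$ forces a contradictory self-map of the circle — is where the technical heart lies.

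An alternative and possibly cleaner line, which I would pursue in parallel, is purely duality-theoretic: transport the J-group equation to the Pontryagin dual $\widehat{G}$, a countable discrete abelian group, and use the algebraic results on (abstract) J-groups from \cite{bernhardt2021} there. The defining relation is not a homomorphism condition, so it does not dualize directly, but the continuity of $f$ together with compactness should let me approximate $f$ by, or constrain it through, its behaviour on the finite-dimensional torus quotients $G \twoheadrightarrow \T^k$ coming from finitely generated subgroups of $\widehat G$. On each such torus quotient the connectedness obstruction from Theorem~\ref{thm_HOMOTOPY} applies, and I would assemble these local obstructions using an inverse-limit argument (valid by metrizability, which guarantees the dual is countable and $G$ is a countable inverse limit of tori and finite groups). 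The crux in every version is the same: converting ``$G_0$ is nontrivial'' into ``a genuine nontrivial homotopy group survives in a setting where Theorem~\ref{thm_HOMOTOPY} forces it to vanish,'' and the care needed is in ensuring the J-group structure is actually present on the connected piece onto which the homotopy obstruction is read off.
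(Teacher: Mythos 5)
Your proposal correctly identifies the central obstacle --- that neither the identity component $G_0$ nor any quotient $G\twoheadrightarrow \T^k$ inherits the topological J-group structure --- but it never overcomes it: both of your branches stop exactly where the actual work would have to begin. The ``winding number / degree'' argument is only a hope, and there is a concrete reason to doubt that any argument of this shape can work: by Proposition \ref{proposition_Z_times_group}, the group $\Z\times\T$ \emph{is} a topological J-group, and $\T$ is a quotient of it, so there can be no general degree-type obstruction attached to circle quotients of topological J-groups; any correct argument must use compactness of $G$ itself in an essential way, which your sketch does not. A second, independent error: your claim that $G_0$ is pathwise connected is false in general --- compact connected abelian groups need not be pathwise connected (the $p$-adic solenoid is the standard example, and the paper notes explicitly that the solenoid is out of reach of Theorem \ref{thm_HOMOTOPY} for precisely this reason). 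So even if you managed to transfer a J-structure to $G_0$, Theorem \ref{thm_HOMOTOPY} would not apply to it; the homotopy-theoretic engine you want to run the proof on cannot start.

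The paper's proof takes a genuinely different route that never restricts or projects the J-structure. It extracts from the J-equation a property that, unlike the structure itself, \emph{is} hereditary under subgroups and quotients: the existence of a pointwise exponent sequence, i.e.\ a single sequence $(n_k)$ of nonzero integers with $x^{n_k}\to 1_G$ for every $x\in G$ simultaneously. Compact metrizability gives such a sequence for the witness $w$ alone (Proposition \ref{proposition_exponent_sequences_in_compact_groups}); then continuity of $f$ together with the identity $f(x\cdot w^{n_k})=f(x)\cdot x^{n_k}\cdot w^{\binom{n_k}{2}}$ from Lemma \ref{lemma_x_plus_nw} upgrades it to a pointwise exponent sequence for all of $G$ (Proposition \ref{proposition_J_group_exponent_sequence}; this is where abelianness enters, making $w$ central). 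A dominated-convergence argument shows $\T$ admits no pointwise exponent sequence (Proposition \ref{proposition_circle_has_no_pointwise_exponent_sequence}), and the structure theory of compact groups (projective limits of Lie quotients, maximal tori) shows that any non-profinite compact group has a circle inside a Lie quotient, to which the pointwise exponent sequence would pass --- contradiction (Proposition \ref{proposition_compact_exponent_sequence_implies_profinite}). This ``convert the J-structure into a hereditary property, then kill the circle'' step is exactly the missing idea in your proposal.
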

  Theorem \ref{thm_COMPACT} can also be used to show that many topological groups which one encounters in topological algebra cannot be topological J-groups.
  For example, the $p$-adic solenoid (see e.g. \cite[Example 1.28  (ii)]{CompBook} for a definition) is not a topological J-group. This does not follow from Theorem \ref{thm_HOMOTOPY} since the solenoid is not pathwise connected.
  
  Although we need metrizability and commutativity for the proof of Theorem \ref{thm_COMPACT} to work, the author conjectures that these assumptions are really not necessary:
  \begin{question}        \label{question_compact_J_group}
    Is it true that every compact topological J-group is profinite?
  \end{question}
  
  In \cite{JGroups}, the authors reduce the problem of deciding whether a given abstract group carries a J-group structure to torsion groups by showing the following (see \cite[Proposition 2.2]{JGroups}):
  \begin{proposition}\label{proposition_infinite_order_J_group}
    If a (discrete) group $G$ contains an element $w\in G$ of infinite order, then there is a map $ f \colon G \rightarrow G$ such that $(G,w,f)$ is a J-group with witness $w$.
  \end{proposition}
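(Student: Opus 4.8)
The plan is to treat the defining equation $f(x\cdot w)=f(x)\cdot x$ as a recursion running along the orbits of the self-map $x\mapsto x\cdot w$ on $G$. These orbits are exactly the left cosets of the cyclic subgroup $\langle w\rangle$, and since $w$ has infinite order, each coset $C$ is freely and transitively parametrised by $\Z$: fixing any representative $x_0\in C$, the map $\Z\to C,\ n\mapsto x_0 w^n$ is a bijection. I would therefore construct $f$ coset by coset, using the axiom of choice to select one representative $x_0$ from every coset of $G/\langle w\rangle$.

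On a fixed coset $C=x_0\langle w\rangle$, set $g(n)\coloneq f(x_0 w^n)$. Substituting $x=x_0 w^n$ into the defining equation turns it into the two-sided recursion
\[
 g(n+1)=g(n)\cdot(x_0 w^n),
\]
which I want to hold for all $n\in\Z$. I would then assign $g(0)\coloneq f(x_0)$ an arbitrary value (for definiteness, the identity $e$) and \emph{define} $g(n)$ for all other $n\in\Z$ by solving this recursion: for $n>0$ one reads off the ordered product $g(n)=g(0)\cdot\prod_{k=0}^{n-1}(x_0 w^k)$, and for $n<0$ one inverts the recursion step, $g(n-1)=g(n)\cdot(x_0 w^{n-1})^{-1}$. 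Because $n\mapsto x_0 w^n$ is injective, distinct integers index distinct elements of $C$, so this prescription assigns to each element of $C$ exactly one value, and collecting the cosets yields a well-defined self-map $\smfunc{f}{G}{G}$.

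It remains to verify that $f$ satisfies $f(x\cdot w)=f(x)\cdot x$ for every $x\in G$. Writing $x=x_0 w^n$ in its coset, the left-hand side is $f(x_0 w^{n+1})=g(n+1)$ while the right-hand side is $g(n)\cdot(x_0 w^n)$; these agree by the very recursion used to define $g$, and this holds for all $n\in\Z$ precisely because the recursion was solved in both directions. The step where infinite order is essential --- and the only place a naive attempt could fail --- is the well-definedness in the previous paragraph: if $w$ had finite order $m$, then $x_0 w^m=x_0$ would force the consistency condition $g(m)=g(0)$, i.e. $\prod_{k=0}^{m-1}(x_0 w^k)=e$, which cannot be arranged in general. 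The infinite order of $w$ removes this wrap-around obstruction entirely, so no compatibility constraint arises on any coset and the construction goes through simultaneously across all of $G$.
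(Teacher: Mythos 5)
Your proof is correct: the orbits of $x\mapsto x\cdot w$ are exactly the cosets $x_0\langle w\rangle$, infinite order of $w$ makes $n\mapsto x_0w^n$ a bijection onto each coset, and solving the two-sided recursion $g(n+1)=g(n)\cdot(x_0w^n)$ coset by coset (with the ordered product correctly handling non-commutativity) yields a well-defined $f$ satisfying $f(x\cdot w)=f(x)\cdot x$. Note that the paper itself gives no proof of this statement --- it is quoted from \cite[Proposition 2.2]{bernhardt2021} --- so there is nothing in-paper to compare against; your construction is the natural argument and, in essence, the one in that reference, with the wrap-around obstruction you identify for finite order being precisely why the hypothesis is needed.
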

  A natural question would be if a similar statement holds for topological J-groups as well. 
  However, both, Theorem \ref{thm_HOMOTOPY} and Theorem \ref{thm_COMPACT}, 
  show that the circle group is not a topological J-group although it has many elements of infinite order. The closest we have as a topological analog is Theorem \ref{thm_neeb} which has strong additional topological assumptions on the underlying space.
  
  Furthermore, in \cite{JGroups}, the authors conjecture that a finite nilpotent group is a J-group if and only if it is of odd order. The assumption of having odd order is clearly necessary as every group with even order cannot be a J-group by \cite[Corollary 2.5]{JGroups}.
  
  We state and prove a similar statement for connected Lie groups\footnote{A  ``Lie group'' in this article is always finite-dimensional and defined over the field of real numbers.}:
  \begin{Ltheorem}[Nilpotent J-Groups]\ \\ \label{thm_NILPOTENT}
    Let $G$ be a nilpotent connected Lie group. Then $G$ is a topological J-group if and only if $G$ is simply connected.
  \end{Ltheorem}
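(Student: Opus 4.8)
The statement is an equivalence whose two implications are of very different natures. For the direction asserting that a topological J-group structure forces simple connectedness, I would argue as follows. A connected Lie group is a connected manifold, hence locally path-connected and therefore pathwise connected. Thus, if the nilpotent connected Lie group $G$ carries a topological J-group structure, Theorem \ref{thm_HOMOTOPY} applies and shows that $G$ is weakly contractible; in particular $\pi_1(G)$ is trivial and $G$ is simply connected. No nilpotency is needed for this half.

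The substantial direction is the converse: every simply connected nilpotent Lie group admits a topological J-group structure. The plan is to exploit the especially rigid geometry of such groups. Since $G$ is simply connected and nilpotent, the exponential map $\exp \colon \g \to G$ is a diffeomorphism, and, fixing a strong Malcev basis $X_1, \dots, X_n$ of $\g$ adapted to a flag of ideals, the coordinates of the second kind
\[
   \Phi(t_1, \dots, t_n) = \exp(t_1 X_1)\cdots \exp(t_n X_n)
\]
give a diffeomorphism $\Phi \colon \R^n \to G$. I would take the witness to be $w = \exp(X_n)$. The decisive feature of these coordinates is that right translation by $w$ acts as a pure shift in the last coordinate: since $\exp(t_n X_n)\exp(X_n) = \exp((t_n+1)X_n)$, one has $\Phi(t', t_n)\cdot w = \Phi(t', t_n + 1)$, where $t' = (t_1, \dots, t_{n-1})$.

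Transported through $\Phi$, the defining identity $f(xw) = f(x)\cdot x$ becomes a difference equation for $F \coloneq f \circ \Phi \colon \R^n \to G$, namely
\[
   F(t', t_n + 1) = F(t', t_n)\cdot \Phi(t', t_n) \qquad (t' \in \R^{n-1},\ t_n \in \R).
\]
I would solve this by prescribing $F$ continuously on the fundamental slice $\R^{n-1}\times [0,1]$ and then extending to all of $\R^n$ by the recursion (upward, and downward using inverses), the boundary condition $F(t',1) = F(t',0)\cdot\Phi(t',0)$ guaranteeing that the pieces glue continuously across integer values of $t_n$ and that the difference equation then holds for every real $t_n$. The only thing to arrange is a continuous slice datum with the prescribed jump, and here I would simply put $F(t',0) = e$ and connect $e$ to $\Phi(t',0)$ along the explicit path $s \mapsto \exp\bigl(s\,\log \Phi(t',0)\bigr)$ for $s\in[0,1]$, which is jointly continuous in $(s,t')$ and ends at $\Phi(t',0) = e\cdot\Phi(t',0)$ as required. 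Setting $f = F\circ \Phi^{-1}$ then yields a continuous self-map of $G$ satisfying the J-identity, so $(G,w,f)$ is a topological J-group.

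The main obstacle is not any single computation but the correct choice of coordinates in the second step: one needs a global chart in which translation by the witness degenerates to a one-dimensional shift, so that the J-identity collapses to an ordinary difference equation. Once this is achieved, the construction of $f$ is soft, and the only place where the hypothesis enters — beyond making $\Phi$ a global diffeomorphism — is the continuous choice of connecting paths, which is available precisely because $G$ is contractible. This is exactly the feature that fails for non-simply-connected nilpotent groups, in harmony with Theorem \ref{thm_HOMOTOPY}.
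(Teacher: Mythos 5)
Your proof is correct, but for the substantial direction it takes a genuinely different route from the paper. The paper also disposes of the ``only if'' direction exactly as you do, via Theorem \ref{thm_HOMOTOPY}. For the converse, however, the paper stays entirely on the Lie algebra side: it transfers the problem through $\exp\colon\g\to G$ (an isomorphism of topological groups onto $(\g,*)$ with the BCH multiplication), uses that a nonzero nilpotent Lie algebra has nontrivial center to pick a \emph{central} witness $w\in\g$, chooses a (automatically continuous) linear functional $\phi$ with $\phi(w)=1$, and then writes down an explicit quadratic formula for $f$ (Theorem \ref{theorem_nilpotent_J_group_lie_algebra_version}); centrality of $w$ makes all relevant BCH products collapse to sums, so the verification reduces to the abelian module computation of Proposition \ref{proposition_module_projection}. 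You instead choose Malcev coordinates of the second kind, observe that right translation by $w=\exp(X_n)$ becomes the unit shift in the last coordinate, and solve the resulting difference equation $F(t',t_n+1)=F(t',t_n)\cdot\Phi(t',t_n)$ by prescribing continuous data on the slab $\R^{n-1}\times[0,1]$ and extending by recursion; the gluing across integer hyperplanes works by the pasting lemma, and your explicit slice datum $\exp\bigl(s\log\Phi(t',0)\bigr)$ satisfies the required boundary condition. Both arguments are complete. What each buys: the paper's construction gives a closed algebraic formula, a witness lying in the center (consistent with the discussion around Question \ref{question_central_witness}), and is stated for nilpotent topological Lie algebras over any topological field of characteristic zero and of arbitrary dimension, so it reaches beyond finite-dimensional real groups. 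Your construction is softer --- it needs no BCH machinery, no linear functional, and notably no centrality of the witness; it shows that any topological group admitting a global chart in which right translation by some element is a coordinate shift, together with enough contractibility to choose connecting paths, is a topological J-group. The price is reliance on finite-dimensional structure theory (existence of a strong Malcev basis and the diffeomorphism property of second-kind coordinates), so it does not generalize as readily in the directions the paper's Lie-algebraic formulation does.
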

  The class of connected Lie groups behaves in many aspects similar to the class of finite groups. The additional assumption of being simply connected is necessary as the fundamental group of a pathwise connected topological J-group has to vanish by Theorem \ref{thm_HOMOTOPY}.
  
  In \cite{JGroups}, it is shown that every finite J-group is solvable. In our topological context, the author conjectured at first that a similar statement should hold for connected Lie groups as well, but it turns out that the universal cover of the special linear group $ \mathrm{SL}_{2}\!\left(   \mathbb{R}    \right)$ is a connected Lie group which is a topological J-group without being solvable. In a certain way, this is the only counter-example as we can show that a topological J-group which is a connected Lie group always decomposes as a semidirect product of a solvable Lie group and a finite number of copies of the universal cover of $ \mathrm{SL}_{2}\!\left(   \mathbb{R}    \right)$ (see Corollary \ref{corollary_contractible_lie_groups}).
  
  Theorem \ref{thm_HOMOTOPY} suggests that there could be interesting examples for topological J-groups among the contractible groups. Famous examples of contractible abelian topological groups are real topological vector spaces. Here we can prove the following positive result:
  \begin{Ltheorem}[Topological Vector J-Groups]        \label{thm_TVS}\ \\
    Let $E$ be a real Hausdorff topological vector space satisfying at least one of the following:
    \begin{itemize}
     \item $E$ is locally convex.
     \item $E$ is paracompact.
     \item The topological dual of $E$ is not trivial.
    \end{itemize}
    Then the additive group of $E$ is a topological J-group.
  \end{Ltheorem}
  As real normed vector spaces are locally convex (and paracompact), this includes all Hilbert spaces and all Banach spaces, in particular.
  \begin{question}         \label{question_TVS}
    Are the hypotheses in Theorem \ref{thm_TVS} really needed?
    The argument for the locally convex case is totally different from the argument for the paracompact case; so it may be true that there exists an argument that does not need any of those assumptions.
    
    However, it seems that there are not so many interesting examples of Hausdorff topological vector spaces that are not paracompact and have a trivial dual space. 
    In particular, the topological vector space $L^p([0,1], \mathbb{R})$ with $0<p<1$ which is possibly the most prominent example of a topological vector space with a trivial dual (see e.g. \cite[Theorem 1]{Day1940}) is metrizable and hence paracompact.
  \end{question}

  Theorem \ref{thm_COMPACT} suggests that there could be interesting examples of topological J-groups in the world of profinite groups.  Naturally, one cannot expect the underlying space to tell us much about the group structure as many profinite groups are topologically just Cantor sets. 
  However, one can say the following:
  
  \begin{Ltheorem}[Abelian Profinite J-Groups]      \ \\ \label{thm_PROFINITE}
    Let $G$ be an abelian profinite group.
    \begin{itemize}
     \item [(a)]    If $G$ is torsion-free then $G$ is a topological J-group. 
     \item [(b)]   If $G$ is a torsion group then $G$ is a topological J-group if and only if the order of every element is an odd number.
    \end{itemize}
  \end{Ltheorem}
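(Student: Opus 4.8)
The plan is to work additively and to squeeze everything out of a single iterated identity. Writing the group law of the abelian group $G$ additively, the condition of Definition~\ref{definition_topological_j_group} becomes $f(x+w)=f(x)+x$, and a one-line induction yields
\[
 f(x+n w)=f(x)+n\,x+\binom{n}{2}\,w \qquad (n\in\N_0).
\]
This identity drives both directions. For the constructions I would also reduce to a single prime: a profinite abelian group splits canonically as a product $G=\prod_p G_p$ of its pro-$p$ Sylow subgroups, and if each factor admits a continuous J-structure $(G_p,w_p,f_p)$, then $w:=(w_p)_p$ together with the product map $f:=\prod_p f_p$ satisfies the relation coordinatewise and is continuous for the product topology. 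So for both parts it suffices to treat one prime at a time.

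For part~(a) I would first fix the structure of a torsion-free abelian pro-$p$ group by Pontryagin duality: torsion-freeness of $G_p$ is dual to divisibility of the discrete $p$-torsion group $\dualGroup{G_p}$, a divisible $p$-group is a sum of Prüfer groups, $\dualGroup{G_p}\cong\bigoplus_\kappa \Z(p^\infty)$, and dualizing back gives $G_p\cong \Z_p^{\kappa}$. On $\Z_p^{\kappa}$ (coordinates $x=(x_0,(x_j)_{j\neq 0})$) I take the witness $w=e_0$ and set
\[
 f(x)=\Bigl(\tbinom{x_0}{2},\ (x_0 x_j)_{j\neq 0}\Bigr).
\]
Pascal's identity $\binom{x_0+1}{2}=\binom{x_0}{2}+x_0$ and $(x_0+1)x_j=x_0x_j+x_j$ give the relation, each coordinate is continuous so $f$ is, and $\binom{x_0}{2}\in\Z_p$ even for $p=2$ because $x_0(x_0-1)$ is always even. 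This settles (a) with no metrizability needed.

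For the \anf{only if} in part~(b) no construction is required: if $G$ is torsion the witness has finite order $d=\mathrm{ord}(w)$, and substituting $n=d$ into the displayed identity (using $dw=0$) forces $d\,x+\binom{d}{2}w=0$ for every $x$. Taking $x=0$ gives $\binom{d}{2}w=0$, hence $d\,x=0$ for all $x$, i.e. $dG=0$; and $\binom{d}{2}w=0$ with $\mathrm{ord}(w)=d$ means $d\mid\tfrac{d(d-1)}{2}$, which happens precisely when $d$ is odd. Thus $\exp(G)=d$ is odd and every element has odd order.

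Conversely, assume every element of $G$ has odd order. Compactness forces bounded exponent: $G=\bigcup_n G[n]$ is a countable union of closed subgroups, so Baire yields an open $G[n]$ of finite index and hence finite $\exp(G)=:m$, which is odd; in particular $G_2=0$ and only odd primes occur, so by the reduction above I may assume $G$ is a pro-$p$ group with $p$ odd. Dualizing and applying Prüfer's theorem to the bounded discrete group $\dualGroup{G}$ gives $G\cong\prod_i \Z/p^{a_i}$, and $\exp(G)=p^b$ is attained by some factor $\Z/p^{b}$; splitting it off, $G\cong \Z/p^{b}\times G'$ with $\exp(G')\mid p^b$, I take $w=(1,0)$ and
\[
 f(x_0,\mathbf x')=\Bigl(\tbinom{x_0}{2}\bmod p^b,\ x_0\,\mathbf x'\Bigr).
\]
Pascal's identity again gives the relation, and $f$ factors through finitely many coordinates so is continuous. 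The decisive point---and the only place the hypothesis enters---is that $\binom{x_0}{2}\bmod p^b$ is well defined as a function of $x_0\bmod p^b$: since $\binom{x_0+p^b}{2}-\binom{x_0}{2}=\tfrac{p^b(2x_0+p^b-1)}{2}$ is a multiple of $p^b$ exactly when $p^b$ is odd, this is the mirror image of the necessity computation. The hard part throughout is precisely this interplay: arranging a (pro)cyclic direct summand to carry the witness and verifying that the discrete antiderivative $\binom{\cdot}{2}$ descends modulo the order of the witness, which is exactly what couples the construction to the odd-order (resp. torsion-free) hypothesis.
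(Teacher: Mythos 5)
Your proof is correct, and its backbone is the same as the paper's: reduce to the structure theory of abelian profinite groups and build the J-map out of the discrete antiderivative $\binom{\cdot}{2}$, with necessity in (b) coming from the witness identity (your displayed identity is the paper's Lemma \ref{lemma_x_plus_nw}, and your "only if" argument re-derives Lemma \ref{lemma_order_of_w}). The differences are in the packaging, and they are worth recording. Where you rederive the structural inputs---the pro-$p$ Sylow splitting, Pontryagin duality plus Pr\"ufer's theorem to get $\Z_p^\kappa$ resp.\ $\prod_i \Z/p^{a_i}\Z$, and a Baire-category argument for bounded exponent---the paper simply cites \cite[Theorem 4.3.3, Corollary 4.3.9]{RibesZalesskii}, which already contain the bounded exponent; your Baire argument is a clean self-contained substitute. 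The J-structures themselves also differ: the paper makes \emph{every} factor ($\Z_p$, resp.\ $\Z/p^i\Z$ of odd characteristic) a topological J-ring (Lemma \ref{lemma_binomial_j_ring}; for $p=2$ via the subring trick $\Z_2\subseteq\Q_2$) and then uses the product structure, i.e.\ diagonal witness $(1,1,\ldots)$ and coordinatewise $f$ (Remark \ref{remark_products_of_J_rings}), whereas you concentrate the witness in one distinguished (pro)cyclic coordinate and let it act on the complementary factor by scalar multiplication. Your construction is exactly the paper's ring-times-module machinery (Proposition \ref{proposition_module_projection} and Lemma \ref{lemma_ring_times_module}), which the paper develops but does not invoke in this particular proof. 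The product route is slightly leaner in part (b): it needs no splitting off of a cyclic factor realizing the exponent, and the invertibility of $2$ in odd-characteristic rings absorbs the well-definedness issue that you check by hand when descending $\binom{\cdot}{2}$ modulo $p^b$. One wording nitpick: $f$ itself does not factor through finitely many coordinates; rather, each \emph{coordinate} of $f$ depends on at most two input coordinates, which is what gives continuity into the product topology.
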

  This includes the group of $p$-adic integers for every prime number $p$, as well as the profinite completion of $ \mathbb{Z}$ as new examples of topological J-groups.
  
  Although Theorem \ref{thm_PROFINITE} might look like a complete answer to the question whether an abelian profinite group is a topological J-group, unfortunately, there are abelian profinite groups where this theorem cannot be applied:
  
  \begin{question}        \label{question_INFINITE_PRODUCT}
    Is it true that the infinite product
    \[
       G = \prod_{j=1}^\infty  \mathbb{Z}/(2^k \mathbb{Z})
    \]
    is a topological J-group?\footnote{For odd primes, the corresponding product is a topological J-group since it can be written as a product of topological J-groups.}
  \end{question}
  The profinite group in Question \ref{question_INFINITE_PRODUCT} 
  contains a dense torsion subgroup but is itself not a torsion group, hence both parts of Theorem
  \ref{thm_PROFINITE} cannot be applied.
  Interestingly, it contains a closed copy of the group of $2$-adic integers, which is a topological J-group.\\
  This shows that deciding whether an abelian profinite group is a topological J-group is difficult. For nonabelian groups, the situation is even worse since this class includes the nonabelian finite case which has still many open problems (see \cite{JGroups}). It would be interesting to know whether some of the methods of finite J-groups in \cite{JGroups} can be modified and carried over to profinite J-groups.
  This is the essence of a current project of the author together with Johannes Flake.

  This article is structured as follows: 
  After introducing basic definitions and proving some elementary facts about topological J-groups in Section \ref{section_basic_definitions},
  we deal with some necessary conditions for a topological group to be a topological J-group in Sections \ref{section_homotopy} and \ref{section_compact}.
  In Section \ref{section_homotopy}, we introduce facts about (free) loop groups and homotopy groups and use them to prove Theorem \ref{thm_HOMOTOPY} and other necessary conditions for a topological group to be a topological J-group that only depend on the homotopy type of the underlying space. In Section \ref
  {section_homotopy} you can also find the proof of Theorem \ref{thm_NILPOTENT}. 
  In Section \ref{section_compact}, we introduce the notion of an exponent sequence of a compact group and use it to prove Theorem \ref{thm_COMPACT}.
  
  Afterwards, in Section \ref{section_j_rings} we use  the concept of a topological J-ring to construct examples of topological J-groups. In particular, we prove Theorems \ref{thm_TVS} and \ref{thm_PROFINITE} stated in the introduction.
  
  There are open problems spread over the whole article, labeled with the word \emph{Question}.

 \section{Basic Definitions and First Results}
  \label{section_basic_definitions}
  \begin{notation}
    All topological spaces are assumed to be Hausdorff, all rings are unital, but not necessarily commutative. The set of natural numbers $ \mathbb{N}=  \left\{ 1,2,3,\ldots \right\} $ starts with $1$; we set $ \mathbb{N}_0:= \mathbb{N}\cup  \left\{ 0 \right\} $. The symbols $ \mathbb{Z}, \mathbb{R}, \mathbb{C}$ denote the sets of integers, 
    reals and complex numbers, endowed with their usual algebraic and topological structure.
    The circle group is denoted by $ \mathbb{T}:= \mathbb{R}/ \mathbb{Z}$ and written additively but is of course topologically isomorphic to $S^1:=(  \left\{ z\in \mathbb{C} \ :\  \left| z \right|=1\right\} ,\cdot)$. If $G$ is a group, the neutral element of $G$ is denoted by $1_G$. 
    A \emph{Lie group} is always finite-dimensional and defined over the field of real numbers, a \emph{Lie algebra} may be defined over any field and can have any dimension.
    
    For a prime number $p$ the symbol $ \mathbb{Z}_p$ denotes the topological ring of $p$-adic  integers (not the finite ring $ \mathbb{Z}/p \mathbb{Z}$) and $ \mathbb{Q}_p$ denotes the topological field of $p$-adic rationals.
    
    For an integer $n\in \mathbb{Z}$ the \emph{second binomial coefficient} is defined as
    \[
       \binom{n}{2} = 2^{-1}\cdot n \cdot (n-1)\in  \mathbb{Z} \text{ for all }n\in  \mathbb{Z}\,.
    \]
    This definition can also be applied to rings in which $2=1+1$ is an invertible element (see Lemma \ref{lemma_binomial_j_ring}).
  \end{notation}
  
  \begin{definition}[Topological J-Group]
    \label{definition_topological_j_group}
    A \emph{topological J-group} is  a triple $(G,w,f)$,
    where $G$ is a Hausdorff topological group,
    $w\in G$ is an element of $G$ (called the \emph{witness}) and $ f \colon G \rightarrow G$ is a
    continuous self-map such that for every $x\in G$ we have
    \[
       f(x\cdot w)=f(x)\cdot x\,.
    \]
    By slight abuse of terminology, we will call  $G$ a (topological) J-group if there exist $w,f$ such that $(G,w,f)$ is a (topological) J-group.
  \end{definition}
  Clearly, a group which is a J-group in the sense of \cite{JGroups} is also a topological J-group in the sense of Definition \ref{definition_topological_j_group} when endowed with the discrete topology.

  \begin{lemma}       \label{lemma_x_plus_nw}
    Let $(G,w,f)$ be a topological J-group and let $x\in G$ be an element which commutes with the witness $w$.
    
    Then we have
    \[
       (\forall n\in \mathbb{Z})\quad 
       f(x\cdot w^n) = f(x)\cdot x^n\cdot w^{\binom{n}{2}}\,.
    \]
    In particular, we have that $f(w)=f(1_G)$ and $f(w^2)=f(1_G)\cdot w$.
  \end{lemma}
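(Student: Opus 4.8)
The plan is to establish the displayed identity by induction on $n$, pivoting at $n=0$ and running the induction in both directions: the defining relation $f(x\cdot w)=f(x)\cdot x$ serves as the upward recursion, and its algebraic inverse serves as the downward one. It is worth noting at the outset that continuity of $f$ plays no role; the statement is purely algebraic, and the commutativity hypothesis on $x$ and $w$ is exactly what allows the bookkeeping to close. The base case $n=0$ is immediate: since $\binom{0}{2}=0$ and $x^0=w^0=1_G$, the claim reduces to $f(x)=f(x)$.

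For the upward step I would assume the formula for some $n\ge 0$ and rewrite $f(x\cdot w^{n+1})=f\bigl((x\cdot w^n)\cdot w\bigr)$. Applying the defining relation with $x\cdot w^n$ in place of $x$ gives $f(x\cdot w^n)\cdot(x\cdot w^n)$; substituting the induction hypothesis and then using that $x$ commutes with $w$ (hence with every power of $w$, in particular $w^{\binom{n}{2}}$) to slide the lone factor $x$ past the accumulated power of $w$, I obtain $f(x)\cdot x^{n+1}\cdot w^{\binom{n}{2}+n}$. The Pascal-type identity $\binom{n}{2}+n=\binom{n+1}{2}$ then closes the step.

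For the downward direction the key observation is that the witness translation $y\mapsto y\cdot w$ is a bijection of $G$, so the functional equation can be solved for $f$ on a shifted argument: substituting $x=y\cdot w^{-1}$ yields $f(y\cdot w^{-1})=f(y)\cdot w\cdot y^{-1}$. Feeding $y=x\cdot w^n$ into this, rewriting $(x\cdot w^n)^{-1}=x^{-1}w^{-n}$ by commutativity, inserting the formula for $n$, and again sliding the stray $x^{-1}$ leftward through the power of $w$, the identity propagates from $n$ to $n-1$, where the arithmetic check $\binom{n}{2}+1-n=\binom{n-1}{2}$ completes the induction. Together the two directions give the claim for all $n\in\Z$.

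The two special cases then follow by specialising to $x=1_G$, which commutes with $w$: taking $n=1$ gives $f(w)=f(1_G)\cdot w^{\binom{1}{2}}=f(1_G)$ since $\binom{1}{2}=0$, and taking $n=2$ gives $f(w^2)=f(1_G)\cdot w^{\binom{2}{2}}=f(1_G)\cdot w$ since $\binom{2}{2}=1$. The only genuinely delicate point is the downward recursion, where one must correctly invert the functional equation and track the sign in the exponent of $w$; the commutativity hypothesis is essential there precisely because it is what lets the factors $x^{\pm 1}$ pass through the accumulated power of the witness.
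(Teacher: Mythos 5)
Your proof is correct and takes essentially the same route as the paper's: a two-sided induction over $\Z$ anchored at $n=0$, driven by the defining relation, commutativity of $x$ with powers of $w$, and the identity $\binom{n}{2}+n=\binom{n+1}{2}$. The only organizational difference is that the paper packages both directions into a single chain of equivalences $P(n+1)\iff P(n)$ (cancelling the common factor $x\cdot w^n$), whereas you run the upward and downward inductions separately, handling the downward step via the inverted relation $f(y\cdot w^{-1})=f(y)\cdot w\cdot y^{-1}$ --- the same cancellation in different clothing.
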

  This is a special case of the more general statement \cite[Lemma 2.1]{JGroups}, but we will give a direct proof for the reader's convenience:
  \begin{proof}[Proof of Lemma \ref{lemma_x_plus_nw}]
    First of all, let us note that the formulas for $f(w)$ and $f(w^2)$ follow from the general formula by setting $x=1_G$ and $n=1,2$.
    
    Let $x\in G$ be an element which commutes with the witness $w$. 
    For each $n\in \mathbb{Z}$ consider the statement
    \[
       P(n):\iff\quad              
       \framebox{$f(x\cdot w^n) = f(x)\cdot x^n\cdot w^{\binom{n}{2}}$}\,.
    \]
    Since $P(0)$ is clearly true, all we have to show is that
    \[
       (\forall n\in \mathbb{Z})\quad               \bigl( P(n+1)\iff P(n)     \bigr)
    \]
    and by a variant of mathematical induction that works on the integers, it will follow that $P(n)$ is true for all $n\in \mathbb{Z}$. 
    
    \begin{align*}
      P(n+1)
      &\iff f(x\cdot w^{n+1}) = f(x)\cdot x^{n+1}\cdot w^{\binom{n+1}{2}}\\
      &\iff f((x\cdot w^n)\cdot w)      = f(x)\cdot x^n\cdot x\cdot w^{\binom{n}{2}+n}\\
      &\iff f(x\cdot w^n)\cdot (x\cdot w^n)   
      = f(x)\cdot x^n\cdot x\cdot w^{\binom{n}{2}}\cdot w^n \\
      &\iff f(x\cdot w^n)\cdot (x\cdot w^n)     
      =  f(x)\cdot x^n\cdot w^{\binom{n}{2}}\cdot 
      (x\cdot w^n)\\
      &\iff f(x\cdot w^n)    
      =  f(x)\cdot x^n\cdot w^{\binom{n}{2}}\\
      &\iff P(n)\,.
      \qedhere
    \end{align*}
  \end{proof}
  
  \begin{lemma}           \label{lemma_order_of_w}
    Let $(G,w,f)$ be a topological J-group. If $w$ is of finite order $n$, then the order of every element which commutes with $w$ is finite and is a divisor of $n$.
    Furthermore, $n$ is an odd number.
  \end{lemma}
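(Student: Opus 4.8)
The plan is to substitute the exponent $n = \mathrm{ord}(w)$ into Lemma~\ref{lemma_x_plus_nw} and exploit the collapse $w^{n} = 1_{G}$. Fix any $x \in G$ commuting with $w$. Applying Lemma~\ref{lemma_x_plus_nw} to this $x$ with the integer $n$ gives $f(x\cdot w^{n}) = f(x)\cdot x^{n}\cdot w^{\binom{n}{2}}$; since $w^{n}=1_{G}$ the left-hand side collapses to $f(x)$, and cancelling $f(x)$ (valid because $G$ is a group) leaves the key identity
\[
  x^{n} = w^{-\binom{n}{2}},
\]
valid for every element $x$ commuting with $w$. Everything then follows from this one equation; note in particular that the topology plays no role.

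First I would read off the finiteness and divisibility claim. Specializing the key identity to $x = 1_{G}$ (which commutes with $w$) yields $w^{\binom{n}{2}} = 1_{G}$. Feeding this back into the key identity for a general $x$ commuting with $w$ gives $x^{n} = w^{-\binom{n}{2}} = 1_{G}$, so every such $x$ has finite order dividing $n$, as desired.

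It remains to prove that $n$ is odd. Here I would use that $w$ has \emph{exact} order $n$: the relation $w^{\binom{n}{2}} = 1_{G}$ established above forces $n \mid \binom{n}{2}$. The last step is the elementary arithmetic fact that $n \mid \binom{n}{2} = \frac{n(n-1)}{2}$ holds precisely when $n$ is odd --- for odd $n$ the quotient $\frac{n-1}{2}$ is an integer, so $n$ divides $\binom{n}{2}$, whereas for even $n = 2m$ one computes $\binom{n}{2} = m(2m-1) \equiv -m \not\equiv 0 \pmod{n}$. Thus $n$ must be odd.

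Since the argument is purely algebraic, the only genuine obstacle --- and it is a minor one --- is this last congruence computation, i.e. cleanly establishing that divisibility of $\binom{n}{2}$ by $n$ is equivalent to the oddness of $n$.
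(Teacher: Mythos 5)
Your proof is correct and follows essentially the same route as the paper: apply Lemma~\ref{lemma_x_plus_nw} with exponent $n$, use $w^{n}=1_{G}$ and cancellation to get $x^{n}\cdot w^{\binom{n}{2}}=1_{G}$, specialize to $x=1_{G}$ to obtain $w^{\binom{n}{2}}=1_{G}$, deduce oddness from $n\mid\binom{n}{2}$, and feed back to conclude $x^{n}=1_{G}$. The only cosmetic difference is the arithmetic at the end: the paper writes $\binom{n}{2}=kn$ and simplifies directly to $n-1=2k$, while you rule out even $n$ by a congruence computation --- both are equivalent one-line arguments.
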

  \begin{proof}
    Assume that $w$ is of finite order $n\in \mathbb{N}$.
    Then by Lemma \ref{lemma_x_plus_nw} we have for all $x\in G$ which commute with $w$:
    \[
       f(x)\cdot x^n\cdot w^{\binom{n}{2}} =  f(x\cdot w^n)\,.
    \]
    Since $w^n=1_G$, this can be reduced to
    \begin{equation}
      x^n\cdot w^{\binom{n}{2}} = 1_G\text{ for all }x\in G \text{ which commute with }w.   \label{eqn_nx_plus_binom}
    \end{equation}
    
    If we first look at the case $x=1_G$, we obtain
    \[
       w^{\binom{n}{2}}  = 1_G
    \]
    which means that $\binom{n}{2}$ has to be a divisor of the order of $w$, i.e. there is a $k\in \mathbb{Z}$ such that
    \[
       \binom{n}{2} = kn\,.
    \]
    Multiplying both sides by $2$ and dividing by $n$ yields $n-1=2k$ and hence $n$ is an odd number.
    
    Let us return to a general $x\in G$ commuting with $w$.
    Plugging in $w^{\binom{n}{2}}=1_G$ into equation (\ref{eqn_nx_plus_binom}), we obtain
    $x^n = 1_G$ which implies that the order of $x$ is finite and a divisor of $n$. This completes the proof.
  \end{proof}
  Lemma \ref{lemma_order_of_w} implies that if the witness $w$ is in the center of $G$, it is of maximal order. 
  One could conjecture that the witness of a topological J-group always lies in the center of the group. 
  This, however, is already false for discrete J-groups as there are examples of finite J-groups where the order of the witness is not the maximal element order (see \cite[Example 5.5]{JGroups}) and hence the witness is not central. Also, for connected Lie groups this is false as in the nonabelian topological J-group $\widetilde{ \mathrm{SL}_{2}\!\left(   \mathbb{R}    \right)}$ every nontrivial element can be a witness by Corollary \ref{corollary_contractible_lie_groups}. 
  There are even connected topological J-groups with a trivial center (see Example \ref{example_pronilpotent_group_with_trivial_center}).

  Considering the definition of a topological J-group one could ask whether it is possible that the self-map $f$ is a group homomorphism or that $w$ is the identity element. Also one could ask if $f$ can be a bijection, i.e. a permutation of the group. Trivially, the trivial group $  \left\{ 1_G \right\} $ is a topological J-group with 
  all these nice properties. Interestingly, this is also the only group with these properties as the following shows:
  \begin{proposition}\label{proposition_trivial_J_group}
    Let $(G,w,f)$ be a topological J-group. Then the following are equivalent:
    \begin{itemize}
     \item [(i)] $G$ is trivial.
     \item [(ii)] $f$ is a group homomorphism.
     \item [(iii)] $w=1_G$
     \item [(iv)] $f$ is injective.
    \end{itemize}
  \end{proposition}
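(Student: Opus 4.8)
The plan is to prove the equivalence by establishing a cycle of implications, using the trivial group as the anchor. The implication (i)$\Rightarrow$(ii), (i)$\Rightarrow$(iii), and (i)$\Rightarrow$(iv) are all immediate, since on the trivial group every map is a homomorphism, the only element is $1_G$, and every self-map is injective. So the real content is showing that each of (ii), (iii), (iv) forces $G$ to be trivial, and I would organize this by proving (ii)$\Rightarrow$(iii), (iii)$\Rightarrow$(i), and (iv)$\Rightarrow$(iii), which together with the trivial forward implications closes all the loops.

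First I would dispatch (iii)$\Rightarrow$(i), which I expect to be the cleanest. If $w=1_G$, then the defining relation $f(x\cdot w)=f(x)\cdot x$ becomes $f(x)=f(x)\cdot x$ for every $x\in G$; left-cancelling $f(x)$ yields $x=1_G$ for all $x$, so $G$ is trivial. Next, for (ii)$\Rightarrow$(iii): assume $f$ is a homomorphism. Applying Lemma \ref{lemma_x_plus_nw} with $x=1_G$ gives $f(w)=f(1_G)=1_G$, since a homomorphism sends the identity to the identity. Now evaluate the defining relation at $x=w$ (which commutes with itself): $f(w\cdot w)=f(w)\cdot w$, so $f(w^2)=w$. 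But by Lemma \ref{lemma_x_plus_nw} we also have $f(w^2)=f(1_G)\cdot w=w$, which is consistent; to extract $w=1_G$ I would instead use that $f$ is a homomorphism together with $f(w)=1_G$ more directly. Since $f(x\cdot w)=f(x)f(w)=f(x)$ by the homomorphism property and $f(w)=1_G$, but the J-group relation gives $f(x\cdot w)=f(x)\cdot x$, comparing the two yields $f(x)=f(x)\cdot x$, hence $x=1_G$ for all $x$, establishing triviality (so in fact (ii) gives (i) directly, which is even stronger than needed).

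For (iv)$\Rightarrow$(iii), suppose $f$ is injective. By Lemma \ref{lemma_x_plus_nw} we have $f(w)=f(1_G)$; injectivity then forces $w=1_G$, which is exactly (iii). This is short and is the step I would present carefully, since it is the only place where injectivity is used and it relies precisely on the identity $f(w)=f(1_G)$ supplied by the lemma.

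The main obstacle, if any, is purely organizational rather than mathematical: one must be careful to route the implications so that every clause is both reached and used, since several of the conditions collapse to triviality through slightly different elementary manipulations. I would therefore present the final cycle as (i)$\Rightarrow$(ii)$\Rightarrow$(iii)$\Rightarrow$(i) together with the two-way link (iii)$\Leftrightarrow$(iv) via $f(w)=f(1_G)$, noting that (i)$\Rightarrow$(iv) is trivial and (iv)$\Rightarrow$(iii) uses injectivity. Throughout, the only nontrivial input is Lemma \ref{lemma_x_plus_nw}, specifically its consequence $f(w)=f(1_G)$, and every other step is a one-line cancellation in the group.
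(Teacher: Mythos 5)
Your proof is correct, and it differs from the paper's in two of the four implications. The shared part: both you and the paper dispose of (iv)$\Rightarrow$(iii) identically, via the identity $f(w)=f(1_G)$ from Lemma \ref{lemma_x_plus_nw} plus injectivity, and both treat the implications out of (i) as immediate. The differences: for (iii)$\Rightarrow$(i) the paper invokes Lemma \ref{lemma_order_of_w} (with $w=1_G$ of order $1$, every element's order divides $1$), whereas you simply substitute $w=1_G$ into the defining relation to get $f(x)=f(x)\cdot x$ and cancel --- a one-line argument that avoids that lemma entirely. For the homomorphism condition, the paper proves (ii)$\Rightarrow$(iii) by combining $f(w^2)=(f(w))^2$ with the lemma's identities $f(w)=f(1_G)$ and $f(w^2)=f(1_G)\cdot w$ to force $w=1_G$; you instead prove the stronger implication (ii)$\Rightarrow$(i) directly, comparing $f(x\cdot w)=f(x)f(w)=f(x)$ (using $f(w)=f(1_G)=1_G$) against the J-group relation $f(x\cdot w)=f(x)\cdot x$ and cancelling to get $x=1_G$ for every $x$. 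Your route is more elementary and more economical --- the only external input is the single identity $f(w)=f(1_G)$, which in fact follows from the J-group axiom at $x=1_G$ without the full lemma --- while the paper's route keeps the argument uniform with its later uses of Lemma \ref{lemma_order_of_w}. Note that your first attempt at the homomorphism step (evaluating at $x=w$, which yields only a consistent identity) was indeed a dead end, but you correctly recognized this and replaced it; the final argument as routed, (i)$\Rightarrow$(ii)$\Rightarrow$(i), (iii)$\Rightarrow$(i)$\Rightarrow$(iii), and (iii)$\Leftrightarrow$(iv), covers all equivalences with no gaps.
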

  \begin{proof}
    It is clear that (i) implies (ii).
    
    We will show that (ii) implies (iii): 
    Since $f$ is a group homomorphism, we have $f(1_G)=1_G$ and $f(w^2)=(f(w))^2$. If we combine these two statements with $f(w)=f(1_G)$ and $f(w^2)=f(1_G)\cdot w$ from Lemma \ref{lemma_x_plus_nw}, we easily deduce that $w=1_G$.
    
    Let us show now that (iii) implies (i). To this end, let us assume that $w$ is the identity element of the group. Then $w$ commutes with all the elements of the group and the order of $w$ is $1$. By Lemma \ref{lemma_order_of_w}, the order of every element has to divide $1$. So, $G$ is trivial.
    
    This shows that (i), (ii), and (iii) are equivalent. 
    Again, by Lemma \ref{lemma_x_plus_nw}, we know that $f(w)=f(1_G)$, so (iv) implies (iii). And it is clear that (iv) follows from (i).
  \end{proof}
  
  It should be noted that there are nontrivial topological J-groups in which the map $f$ is surjective, e.g. the group $( \mathbb{C},1,f)$ with $f(z)=\binom{z}{2}=\frac12z(z-1)$ (see Lemma \ref{lemma_j_field}).
  
  We end this section by stating a few closure properties of the class of J-groups:
  \begin{remark} \label{remark_closure_properties}
    \begin{itemize}
     \item [(a)] The product of any family of topological J-groups is again a topological J-group. 
     \item [(b)] In general, neither quotients nor closed subgroups of topological J-groups are topological J-groups, as the example $G= \mathbb{Z}\times \mathbb{T}$ shows. The group $G$ is a topological J-group by Proposition \ref{proposition_Z_times_group}, but the direct factor $ \mathbb{T}$ (which is a quotient and a subgroup) is not a topological J-group by Theorems \ref{thm_HOMOTOPY} and \ref{thm_COMPACT}.
      Even subgroups of finite index do not need to be J-groups as there are examples of subgroups of finite J-groups which are not J-groups (see \cite[Remark 3.5]{JGroups}).
     \item [(c)] Direct limits of topological J-groups need not be topological J-groups. For a counter-example, fix an odd prime number $p$.
      For each $n\in \mathbb{N}$ consider the cyclic group 
      \[
         G_n:=  \left\{ \left[\frac{k}{p^n}\right]_ \mathbb{Z} \ :\ k\in \mathbb{Z}\right\} \subseteq  \mathbb{R}/ \mathbb{Z}
      \]
      of order $p^n$, regarded as a subgroup of the additive circle group $ \mathbb{T}= \mathbb{R}/ \mathbb{Z}$. 
      Each group $G_n$ is isomorphic to the additive group of the discrete ring $ \mathbb{Z}/p^n \mathbb{Z}$ which is a J-group by Lemma \ref{lemma_binomial_j_ring}.
      
      The system $ \left( G_n \right)_{n\in \mathbb{N}}$ is a directed family of discrete groups. The direct limit in the category of topological groups is the \emph{Prüfer group}
      \[
         G=\bigcup_{n\in \mathbb{N}}G_n =   \left\{ \left[\frac{k}{p^n}\right]_ \mathbb{Z} \ :\ k\in \mathbb{Z};  n\in \mathbb{N}\right\}       
      \]
      endowed with the discrete topology. This group cannot be a J-group since it is abelian but does not contain an element of maximal order, contradicting Lemma \ref{lemma_order_of_w}.
      
      The reason for this problem lies in the fact that the witnesses and the self-maps of $G_n$ are not compatible.

      This problem could be fixed by only considering direct limits of compatible systems of topological J-groups.
      Let $( \mathbb{I},\leq)$ be a directed set and consider a directed system $ \left( G_\alpha,w,f_\alpha \right)_{\alpha\in  \mathbb{I}}$ of ascending topological J-groups, where $w$ is the same for the whole directed system and the maps $f_\alpha$ are compatible in the sense that $f_\beta|_{G_\alpha}=f_\alpha$ whenever $\alpha\leq\beta$. Then the direct limit would be a topological J-group again, assuming the directed system has the \emph{algebraic colimit property} (see \cite{LongColimitsI}), i.e. the direct limit in the category of topological spaces agrees with the direct limit in the category of topological groups. Otherwise it is not clear that the map $f$ defined on the direct limit is continuous. 
    \end{itemize}
  \end{remark}
  
  \begin{question}\label{question_projective_limit}
    Is the projective limit of topological J-groups, taken in the category of topological groups again a topological J-group?
    
    The author conjectures that this is false in general\footnote{It is however possible that the projective limit of a family of topological groups which are not topological J-groups turns out to be a topological J-group, e.g. $ \mathbb{Z}_2=\lim_n  \mathbb{Z}/(2^n \mathbb{Z})$.} as there is no obvious way to construct a witness of the projective limit out of the witnesses of the original groups as the witnesses and the maps $f$ are not required to be compatible. However, with the correct compatibility requirements, it is easy to show that the projective limit of topological J-groups will be a topological J-group.
  \end{question}

  \begin{question}		\label{question_definition_J_group}
    We say that a topological group $G$ has property $(J_1),(J_2),(J_3),(J_4)$ respectively, if there is a constant $w$ and a continuous self-map $ f \colon G \rightarrow G$ such that
    \begin{itemize}
     \item [($J_1$)] $f(x\cdot w) = x\cdot f(x);$
     \item [($J_2$)] $f(x\cdot w) = f(x)\cdot x;$
     \item [($J_3$)] $f(w\cdot x) = x\cdot f(x);$
     \item [($J_4$)] $f(w\cdot x) = f(x)\cdot x.$
    \end{itemize}
    In Definition \ref{definition_topological_j_group}, we took property $(J_2)$ to define a topological J-group, but it is a natural question whether these different concepts all lead to the same class of topological groups. 
    It is easy to see $(J_1)$ is equivalent to $(J_4)$ and that $(J_2)$ is equivalent to $(J_3)$ (the witness $w$ and the self-map $f$ have to be modified). 
    In \cite[Remark 2.6]{JGroups} it is shown that for \emph{discrete} J-groups all four notions are equivalent. However, it seems that the methods cannot be carried over. In any way, the difference seems to be not so important for the results of the present article since in most of our concrete examples the witness $w$ lies in the center of the group and in this case $(J_1)$ is clearly equivalent to $(J_2)$.
  \end{question}

 \section{J-Groups and Homotopy}
  \label{section_homotopy}
  In this section we will find some necessary conditions on a topological group $G$ for being a topological J-group, that depend only on the homotopy type of the underlying space $G$ and not on the group structure. 
  
  Before we do that, let us recall some topological concepts and fix some notation:
  \begin{notation}
    Let $X$ be a topological space and $x\in X$. The connected component of $x$ in $X$ is denoted by $ \left[ x \right]_\mathrm{c}$; the path-component of $x$ in $X$ is denoted by $ \left[ x \right]_p$.
    
    For a topological space $X$ the space of connected component is denoted by $ \pi_\mathrm{c}\!\left(    X    \right)$ and endowed with the quotient topology. It is always totally disconnected.
    A continuous map $ f \colon X \rightarrow Y$ between topological spaces maps connected components of $X$ into connected components of $Y$, therefore it induces a continuous map 
    \[
       \pi_\mathrm{c}\!\left(    f    \right) \colon  \pi_\mathrm{c}\!\left(    X    \right) \rightarrow  \pi_\mathrm{c}\!\left(    Y    \right)\,.
    \]
    This defines a functor
    \[
       \pi_\mathrm{c}\!\left(    \cdot    \right) \colon  \mathbf{TOP} \rightarrow  \mathbf{TOP}    
    \]
    which is multiplicative and therefore maps topological groups to topological groups, i.e. $ \pi_\mathrm{c}\!\left(    G    \right)$ carries the structure of a topological group if $G$ is a topological group.\footnote{This is exactly the topological group one obtains by factoring out the connected component of the identity, see e.g. \cite{ComponentFactor}.}
    
    The set of path-components of $X$ is denoted by $ \pi_0\!\left(    X    \right)$ and endowed with the discrete topology\footnote{One could also take the quotient topology but this could be non-Hausdorff and we do not want to study non-Hausdorff groups.}.
    Analogously, we obtain a functor
    \[
       \pi_0\!\left(    \cdot    \right) \colon  \mathbf{TOP} \rightarrow  \mathbf{SET}    
    \]
    which takes topological groups to groups.
    
    Recall that for a topological space $X$ with basepoint $x_0$, the $n$-th \emph{homotopy group} can be defined recursively as
    \[
       \pi_0(X,x_0):= \pi_0\!\left(    X    \right);\quad\quad
       \pi_{n+1}(X,x_0):=\pi_n\bigl(\Omega (X,x_0)\bigr)\,,
    \]
    where $\Omega(X,x_0)$ is the space of closed loops based at $x_0$, endowed with the compact-open topology (see e.g. \cite[Section 4.3]{Hatcher}).
    
    We say a space $X$ is \emph{simply connected} if it is pathwise connected and $\pi_1(X,x_0)$ is trivial (the basepoint $x_0$ is irrelevant here).

    A topological group $G$ will always be considered a topological space with basepoint, where the basepoint is equal to the neutral element $1_G$.
  \end{notation}
  We will now introduce a method to create new topological J-groups from old ones:
  \begin{proposition} \label{proposition_pi_0}
    Let $(G,w,f)$ be a topological J-group.
    \begin{itemize}
     \item [(a)] The group of connected components 
      \[
         \pi_\mathrm{c}\!\left(    G    \right)=  \left\{  \left[ a \right]_\mathrm{c} \ :\ a\in G\right\} \,,
      \]
      endowed with the quotient topology,
      is a topological J-group.
     \item [(b)] The group of path-components 
      \[
         \pi_0\!\left(    G    \right)=  \left\{  \left[ a \right]_p \ :\ a\in G\right\} \,,
      \]
      endowed with the discrete topology,
      is a (topological) J-group.
    \end{itemize}
  \end{proposition}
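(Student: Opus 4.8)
The plan is to push the J-group structure of $(G,w,f)$ forward along the two canonical functors, using that $\spaceOfComp{\cdot}$ and $\spaceOfPathComp{\cdot}$ are multiplicative and hence send the topological group $G$ to a topological group. Both parts are then handled by one and the same computation.

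For part (a), write $\smfunc{q}{G}{\spaceOfComp{G}}$, $x\mapsto\comp{x}$, for the canonical map; since the functor is multiplicative this is a continuous surjective group homomorphism (it is precisely the quotient of $G$ by the connected component of the identity). I would take as witness $\comp{w}=q(w)$ and as self-map $\bar f:=\spaceOfComp{f}$, the image of the continuous map $f$ under the functor. By the definition of $\spaceOfComp{\cdot}$ on morphisms one has $\spaceOfComp{f}(\comp{x})=\comp{f(x)}$ for all $x\in G$, i.e.\ the intertwining relation $\bar f\circ q=q\circ f$; moreover $\bar f$ is continuous, being the functorial image of a continuous map.

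It then remains to verify the J-group identity, which I would do by reducing everything back to $G$ via $q$. Since $q$ is surjective, every element of $\spaceOfComp{G}$ has the form $q(x)$, and using that $q$ is a homomorphism together with $\bar f\circ q=q\circ f$ I would compute, for $x\in G$,
\begin{align*}
 \bar f\bigl(q(x)\cdot q(w)\bigr)
  &= \bar f\bigl(q(x\cdot w)\bigr)
   = q\bigl(f(x\cdot w)\bigr)
   = q\bigl(f(x)\cdot x\bigr)\\
  &= q\bigl(f(x)\bigr)\cdot q(x)
   = \bar f\bigl(q(x)\bigr)\cdot q(x),
\end{align*}
the middle equality being the J-group equation for $(G,w,f)$. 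Hence $\bigl(\spaceOfComp{G},\comp{w},\spaceOfComp{f}\bigr)$ is a topological J-group. For part (b) the argument is word for word the same with $\smfunc{q}{G}{\spaceOfPathComp{G}}$, $x\mapsto\pathComp{x}$, witness $\pathComp{w}$ and self-map $\spaceOfPathComp{f}$; as $\spaceOfPathComp{G}$ carries the discrete topology, continuity of $\spaceOfPathComp{f}$ is automatic.

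I do not expect a genuine obstacle here; the only point needing care is that $f$ is merely a continuous self-map, not a homomorphism, so its functorial image $\bar f$ is likewise only a continuous self-map. What makes the computation close up is that the homomorphism property is carried entirely by the quotient map $q$, which therefore distributes over both products $x\cdot w$ and $f(x)\cdot x$, while functoriality supplies the relation $\bar f\circ q=q\circ f$. Beyond this bookkeeping there is nothing delicate.
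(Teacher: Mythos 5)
Your proposal is correct and is essentially the paper's own proof: the paper likewise pushes $f$ forward to $\spaceOfComp{f}$ (resp. $\spaceOfPathComp{f}$) via the functor and verifies the identity through the chain $\spaceOfComp{f}\left(\comp{x}\cdot\comp{w}\right)=\comp{f(x\cdot w)}=\comp{f(x)\cdot x}=\spaceOfComp{f}\left(\comp{x}\right)\cdot\comp{x}$, which is exactly your computation written with bracket notation instead of the quotient map $q$.
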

  \begin{proof}
    (a)\\
    The continuous map $ f \colon G \rightarrow G$ induces a continuous map $  \pi_\mathrm{c}\!\left(    f    \right) \colon  \pi_\mathrm{c}\!\left(    G    \right) \rightarrow  \pi_\mathrm{c}\!\left(    G    \right)$ which satisfies
    \[
       \pi_\mathrm{c}\!\left(    f    \right)\left( \left[ x \right]_\mathrm{c}\cdot  \left[ w \right]_\mathrm{c}\right) =  \left[ f(x\cdot w) \right]_\mathrm{c}=  \left[ f(x)\cdot x \right]_\mathrm{c}=  \pi_\mathrm{c}\!\left(    f    \right)\left( \left[ x \right]_\mathrm{c}\right)\cdot \left[ x \right]_\mathrm{c}\,.
    \]
    
    The proof of part (b) is completely analogous.
  \end{proof}

  \begin{remark}
    Since $ \pi_0\!\left(    G    \right)$ is a J-group for each topological J-group $G$ by Proposition \ref{proposition_pi_0}, one could hope that the same holds for the fundamental group $\pi_1(G)$ of a topological J-group $G$.
    
    However, this is not true as the example $G= \mathbb{Z}\times  \mathrm{SO}_{3}\!\left(   \mathbb{R}    \right)$ shows. The group $G$ is a topological J-group by Proposition  \ref{proposition_Z_times_group}. However, its fundamental group
    \[
       \pi_1(G) =\pi_1\bigl( \mathbb{Z}\times  \mathrm{SO}_{3}\!\left(   \mathbb{R}    \right)\bigr)
       \cong\pi_1( \mathbb{Z})\times \pi_1( \mathrm{SO}_{3}\!\left(   \mathbb{R}    \right))
       \cong \pi_1( \mathrm{SO}_{3}\!\left(   \mathbb{R}    \right))
    \]
    has order $2$ (see e.g. \cite[Section 3.D; Exercise 2]{Hatcher}) and so it cannot be a J-group by Lemma \ref{lemma_order_of_w}.
  \end{remark}

  \begin{corollary}      \label{corollary_connected_component}
    Let $(G,w,f)$ be a topological J-group.
    \begin{itemize}
     \item [(a)] If the number of connected components of $G$ is finite, this number is odd.
     \item [(b)] If the number of path-components of $G$ is finite, this number is odd.
     \item [(c)] If $1_G$ and $w$ lie in the same connected component, then $G$ is connected.
     \item [(d)] If $1_G$ and $w$ lie in the same path-component, then $G$ is pathwise connected.
    \end{itemize}
  \end{corollary}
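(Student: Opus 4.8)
The plan is to deduce all four parts from the two functorial constructions of Proposition \ref{proposition_pi_0} together with the structural results of Section \ref{section_basic_definitions}, so that essentially no new computation is required. The guiding observation is that passing from $G$ to $\spaceOfComp{G}$ (resp.\ to $\spaceOfPathComp{G}$) sends the witness $w$ to $\comp{w}$ (resp.\ $\pathComp{w}$) and the neutral element to $\comp{1_G}$ (resp.\ $\pathComp{1_G}$), and that these quotient J-groups are small enough --- either finite, or carrying a trivial witness --- that the earlier results pin them down completely.

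For parts (a) and (b) I would argue as follows. By Proposition \ref{proposition_pi_0}(a) the group of connected components $H:=\spaceOfComp{G}$, with witness $\comp{w}$, is again a topological J-group. If $G$ has only finitely many connected components then $H$ is a finite topological group; since every topological group here is Hausdorff and a finite Hausdorff space is discrete, $H$ is a finite \emph{discrete} J-group, i.e.\ a J-group in the sense of \cite{bernhardt2021}. Now I invoke the fact, recorded in the introduction, that a group of even order cannot be a J-group (see \cite[Corollary 2.5]{bernhardt2021}); hence $\abs{H}$ is odd. As $\abs{H}$ is precisely the number of connected components of $G$, part (a) follows. Part (b) is the verbatim analogue with $\spaceOfPathComp{G}$ in place of $\spaceOfComp{G}$, using Proposition \ref{proposition_pi_0}(b); here $\spaceOfPathComp{G}$ is discrete by definition, so the argument is unchanged apart from bookkeeping.

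For parts (c) and (d) the key point is that the hypothesis forces the witness of the quotient J-group to be the identity. Indeed, if $1_G$ and $w$ lie in the same connected component, then $\comp{w}=\comp{1_G}$, and $\comp{1_G}$ is exactly the neutral element of $H=\spaceOfComp{G}$. Thus the J-group $H$ has witness equal to its identity, so by the implication (iii)$\Rightarrow$(i) of Proposition \ref{proposition_trivial_J_group} the group $H$ is trivial; equivalently, $G$ has a single connected component, i.e.\ $G$ is connected. Part (d) is identical, using $\spaceOfPathComp{G}$ and the equality of path-components $\pathComp{w}=\pathComp{1_G}$.

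I expect the only genuine subtlety to lie in (a)/(b): one must check that finiteness of the number of components really upgrades the quotient to a finite discrete J-group (so that the odd-order theorem for finite J-groups is applicable) and that the order of the quotient is literally the number of components. Everything in (c)/(d) is then formal, reducing to the recognition of a trivial witness and a single citation of Proposition \ref{proposition_trivial_J_group}.
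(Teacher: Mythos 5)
Your proof is correct and follows essentially the same route as the paper's: Proposition \ref{proposition_pi_0} to pass to $\spaceOfComp{G}$ and $\spaceOfPathComp{G}$, then \cite[Corollary 2.5]{bernhardt2021} for (a)/(b) and the implication (iii)$\Rightarrow$(i) of Proposition \ref{proposition_trivial_J_group} for (c)/(d). The only difference is that you spell out the (correct, harmless) detail that a finite Hausdorff quotient is discrete, so the finite-J-group result of \cite{bernhardt2021} applies; the paper leaves this implicit, as any topological J-group is in particular an abstract J-group.
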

  \begin{proof}
    By Proposition \ref{proposition_pi_0} the groups $ \pi_\mathrm{c}\!\left(    G    \right)$ and $ \pi_0\!\left(    G    \right)$ are J-groups with witness $ \left[ w \right]_\mathrm{c}$, and $ \left[ w \right]_p$ respectively. 
    
    For (a) and (b), we apply \cite[Corollary 2.5]{JGroups} to the group $ \pi_\mathrm{c}\!\left(    G    \right)$, or $ \pi_0\!\left(    G    \right)$, respectively.
    
    For (c) and (d), we apply Proposition \ref{proposition_trivial_J_group} to the group $ \pi_\mathrm{c}\!\left(    G    \right)$, or $ \pi_0\!\left(    G    \right)$, respectively.
  \end{proof}
  
  \begin{example}
    Corollary \ref{corollary_connected_component} shows that the real multiplicative group $( \mathbb{R}\setminus  \left\{ 0 \right\} ,\cdot)\cong \mathbb{R}\times( \mathbb{Z}/2 \mathbb{Z})$ is not a topological J-group as it has two connected components. More generally, for each $n\in \mathbb{N}$, the real general linear group $ \mathrm{GL}_{n}\!\left(   \mathbb{R}    \right)$ has two connected components and hence is not a topological J-group. The same is true for the orthogonal groups $ \mathrm{O}_{n}\!\left(   \mathbb{R}    \right)$.
  \end{example}
  
  Now we continue constructing new topological J-groups:
  \begin{proposition}\label{proposition_mapping_J_groups}
    Let $D$ be a Hausdorff topological space and let $(G,w,f)$ be a topological J-group. 
    We denote by $C(D,G)$ the group of all continuous functions from $D$ to $G$, endowed with the compact-open topology and pointwise group operations.
    
    Let $C(D,w)$ be the constant function, mapping everything to $w$ and let $C(D,f)$ be the map
    \[
       C(D,f) \colon C(D,G) \rightarrow C(D,G) ,\ \gamma \mapsto f\circ\gamma\,.
    \]
    
    \begin{itemize}
     \item [(a)]  Then $\bigl(C(D,G),C(D,w),C(D,f))$ is a  topological J-group as well.
     \item [(b)] If $G$ is connected then $C(D,G)$ is connected as well.
     \item [(c)] If $G$ is pathwise connected then $C(D,G)$ is pathwise connected as well.
    \end{itemize}
  \end{proposition}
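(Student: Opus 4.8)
The plan is to settle part (a) by reducing everything to pointwise statements and standard properties of the compact-open topology, and then to obtain parts (b) and (c) as consequences of part (a) together with Corollary~\ref{corollary_connected_component}. It is worth stressing at the outset that (b) and (c) genuinely need the J-group hypothesis on $G$ and are \emph{not} true for arbitrary topological groups: for instance $C(\SOne,\SOne)$ has infinitely many connected components (indexed by winding number), even though $\SOne$ is both connected and pathwise connected. Thus the role of (a) is not merely bookkeeping; it is exactly what allows us to invoke the J-group machinery on the mapping group.

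For part (a) I would proceed in three steps. First, verify that $C(D,G)$ with pointwise operations and the compact-open topology is a Hausdorff topological group; here $G$ Hausdorff gives Hausdorffness of $C(D,G)$, since for distinct maps the evaluation sets $\set{\gamma\in C(D,G)}{\gamma(\smset{d})\subseteq U}$ separate them. Second, check that $C(D,f)$ is continuous: writing $N(K,U)=\set{\gamma\in C(D,G)}{\gamma(K)\subseteq U}$ for the subbasic open sets ($K\subseteq D$ compact, $U\subseteq G$ open), one has $C(D,f)^{-1}\bigl(N(K,U)\bigr)=N\bigl(K,f^{-1}(U)\bigr)$, which is open because $f$ is continuous. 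Third, the defining J-group identity is checked pointwise: for $\gamma\in C(D,G)$ and $d\in D$,
\[
 C(D,f)\bigl(\gamma\cdot C(D,w)\bigr)(d)=f\bigl(\gamma(d)\cdot w\bigr)=f\bigl(\gamma(d)\bigr)\cdot\gamma(d)=\bigl(C(D,f)(\gamma)\cdot\gamma\bigr)(d),
\]
using the J-group property of $G$ in the middle equality; since $d$ is arbitrary this is an equality of functions, so $\bigl(C(D,G),C(D,w),C(D,f)\bigr)$ is a topological J-group.

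The one genuinely delicate point — and the step I expect to be the main obstacle — is the continuity of pointwise multiplication $C(D,G)\times C(D,G)\to C(D,G)$ in the compact-open topology. Given $\gamma_0\delta_0\in N(K,U)$, for each $k\in K$ continuity of multiplication in $G$ produces open neighbourhoods $A_k\ni\gamma_0(k)$ and $B_k\ni\delta_0(k)$ with $A_kB_k\subseteq U$, and the sets $\gamma_0^{-1}(A_k)\cap\delta_0^{-1}(B_k)$ form an open cover of $K$. To convert a finite subcover into basic open neighbourhoods of $\gamma_0$ and $\delta_0$ I must refine it to a cover of $K$ by \emph{compact} pieces; this is precisely where the hypothesis that $D$ is Hausdorff enters, since then $K$ is compact Hausdorff, hence normal, and the shrinking lemma for finite open covers applies. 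Continuity of inversion is easier, as the preimage of $N(K,U)$ equals $N(K,U^{-1})$ with $U^{-1}$ open.

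Finally, parts (b) and (c) fall out cleanly. The constant-function map $c\colon G\to C(D,G)$, $g\mapsto C(D,g)$, is continuous (for $K\neq\emptyset$ its preimage of $N(K,U)$ is just $U$), and it sends $1_G$ to the identity $1_{C(D,G)}$ and $w$ to the witness $C(D,w)$. If $G$ is connected, then $c(G)$ is a connected subset of $C(D,G)$ containing both $1_{C(D,G)}$ and $C(D,w)$, so witness and identity lie in the same connected component; since $C(D,G)$ is a topological J-group by (a), Corollary~\ref{corollary_connected_component}~(c) yields that $C(D,G)$ is connected. If $G$ is pathwise connected, choose a path in $G$ from $1_G$ to $w$ and compose it with $c$ to obtain a path in $C(D,G)$ from $1_{C(D,G)}$ to $C(D,w)$; Corollary~\ref{corollary_connected_component}~(d) then gives that $C(D,G)$ is pathwise connected.
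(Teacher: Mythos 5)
Your proof is correct and follows essentially the same route as the paper: the J-group identity is verified pointwise, and (b)/(c) are deduced by noting that the witness $C(D,w)$ and the identity lie in the connected (resp.\ pathwise connected) subspace of constant maps and then invoking Corollary~\ref{corollary_connected_component}. The only difference is that you spell out the standard compact-open-topology facts (Hausdorffness, continuity of multiplication and of $C(D,f)$) that the paper delegates to the functoriality of $C(D,\cdot)$ and its cited references.
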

  
  \begin{proof}
    (a)\\
    For a fixed topological space $D$, the assignment
    \[
       C(D,\cdot) \colon  \mathbf{TOP} \rightarrow  \mathbf{TOP}    
    \]
    is a multiplicative functor, sending topological groups to topological groups.\footnote{See \cite[Chapter 3.4]{Engel} or \cite[Appendix A]{Hatcher} for more on the compact-open topology.}
    It remains to show that for $\gamma\in C(D,G)$ we have 
    \[
       C(D,f) \bigl(  \gamma\cdot C(D,w)      \bigr) =C(D,f)\bigl(  \gamma\bigr)  \cdot \gamma\,.
    \]
    Since all group operations are pointwise, we easily check for each $d\in D$:
    \begin{align*}
      C(D,f) \bigl(  \gamma\cdot C(D,w)      \bigr) (d)
      &=  f\circ \bigl(  \gamma\cdot C(D,w)      \bigr) (d)
      \\&=  f\bigl(  \gamma(d) \cdot w      \bigr)       
      \\&=  f\bigl(  \gamma(d)\bigr)  \cdot \gamma(d)      \\&=  \left(C(D,f)\bigl(  \gamma\bigr)  \cdot \gamma\right(d)\,.      
    \end{align*}
    This completes the proof that $C(D,G)$ is a topological J-group.
    
    (b)\\
    By part (a), the witness of $C(D,G)$ is the constant map $C(D,w)$. The neutral element of $C(D,G)$ is a constant map $C(D,1_G)$. So, the witness and the neutral element both lie in the subspace of constant maps which is homeomorphic to $G$ and therefore connected.
    Hence the witness and the neutral element share the same connected component in $C(D,G)$ and by Corollary \ref{corollary_connected_component} applied to the topological J-group $C(D,G)$, the group $C(D,G)$ is connected.
    
    Part (c) can be shown analogously.
  \end{proof} 
  
  Now we are able to show that every pathwise connected topological J-group is weakly  contractible, i.e. all homotopy groups vanish:
  \begin{proof}[Proof of Theorem \ref{thm_HOMOTOPY}]
    For each $n\in \mathbb{N}_0$ we consider the proposition
    \[
       P(n):\iff\quad              
       \framebox{For every topological J-group $G$: $\bigl( \left| \pi_0(G) \right|=1 \implies  \left| \pi_n(G) \right|=1\bigr)$}\,.
    \]
    We will show $(\forall n\in \mathbb{N}_0)\,P(n)$ by induction.
    
    As $P(0)$ is trivial, we may assume $P(n)$ is true and show $P(n+1)$.
    
    Recall that $\pi_{n+1}(G)=\pi_n(\Omega(G))$, where 
    $\Omega(G):=  \left\{ \gamma\in C( \mathbb{T},G) \ :\ \gamma(0)=1_G\right\} $ 
    is the \emph{loop group} of $G$. Since every element in $C( \mathbb{T},G)$ can be written as a product of an element of $\Omega(G)$ and a constant map, we have a decomposition
    \[
       C( \mathbb{T},G) \cong \Omega(G)\rtimes G
    \]
    as topological groups. 
    In particular, we have $C( \mathbb{T},G)\approx \Omega(G)\times G$ as topological spaces.
    Hence, we can compute:
    \begin{align*}
      \left| \pi_{n+1}(G) \right|   &=  \left| \pi_n(\Omega(G)) \right|\\
      &=  \left| \pi_n(\Omega(G)) \right|\cdot  \left| \pi_n(G) \right|\\
      &=  \left| \pi_n(\Omega(G)\times G) \right|\\
      &=  \left| \pi_n(C( \mathbb{T},G)) \right|\,.
    \end{align*}
    By Proposition \ref{proposition_mapping_J_groups} we know that $C( \mathbb{T},G)$ is a pathwise connected topological J-group and hence by $P(n)$ we have $ \left| \pi_n(C( \mathbb{T},G)) \right|=1$, which completes the proof.
  \end{proof}
  
  \begin{remark}
    We have used in the proof of Theorem \ref{thm_HOMOTOPY}
    that given a topological J-group $G$, the \emph{free loop group} $C( \mathbb{T},G)$ is a topological J-group. One could expect that the \emph{loop group} (with basepoint)
    \[
       \Omega(G)=  \left\{ \gamma\in C( \mathbb{T},G) \ :\ \gamma(0)=1_G\right\}     
    \]
    is a topological J-group as well. Unfortunately, this is not true, in general, as the following example shows:
    
    Let $G:= \mathbb{Z}\times  \mathrm{SU}_{2}\!\left(   \mathbb{C}    \right)$. Then $G$ is a topological J-group by Proposition \ref{proposition_Z_times_group}. 
    We will show that $\Omega(G)$ cannot be a topological J-group. 
    First of all, we notice that 
    \[
       \Omega(G)\cong\Omega \mathbb{Z}\times\Omega( \mathrm{SU}_{2}\!\left(   \mathbb{C}    \right))\cong\Omega( \mathrm{SU}_{2}\!\left(   \mathbb{C}    \right))\,.   
    \]
    Now, we will compute the $0$th and the $2$nd homotopy group of $\Omega(G)$. To this end, we will use the fact that $ \mathrm{SU}_{2}\!\left(   \mathbb{C}    \right)$ is homeomorphic to the $3$-sphere $S^3\subseteq \mathbb{R}^4$ (see e.g. \cite[Section 2.3]{HilgertNeeb}).
    
    We have
    \[
       \pi_0(\Omega(G))\cong\pi_0(\Omega( \mathrm{SU}_{2}\!\left(   \mathbb{C}    \right)))\cong\pi_1( \mathrm{SU}_{2}\!\left(   \mathbb{C}    \right))=  \left\{ 1 \right\} \,,
    \]
    as the $3$-sphere is simply connected (see e.g. \cite[Section 4.1]{Hatcher}). This shows that $\Omega(G)$ is pathwise connected. We further calculate:
    \[
       \pi_2(\Omega( \mathrm{SU}_{2}\!\left(   \mathbb{C}    \right))) = \pi_3( \mathrm{SU}_{2}\!\left(   \mathbb{C}    \right))\cong  \mathbb{Z}\,,
    \]
    since the $3$rd homotopy group of the $3$-sphere is isomorphic to $ \mathbb{Z}$ (again, see \cite[Section 4.1]{Hatcher} for basic facts about homotopy groups). Therefore, the group $\Omega(G)$ is pathwise connected but not weakly contractible, so it cannot be a topological J-group by Theorem \ref{thm_HOMOTOPY}.
  \end{remark}
  It should be noted that for connected Lie groups also the converse of Theorem \ref{thm_HOMOTOPY} holds, i.e. a contractible Lie group is a topological J-group (Corollary \ref{corollary_contractible_lie_groups}). However, for general topological groups, the converse is false, i.e. there are contractible topological groups that are not topological J-groups. We give an example of a contractible abelian Polish (which means completely metrizable and separable) group with the desired properties:
  \begin{example}         \label{example_boolean_group}
    Let $A:=L^1([0,1], \mathbb{Z})$ be the group of all (equivalence classes of) $L^1$-functions on the interval $[0,1]$ which take only integer values (almost everywhere) and consider the quotient $G:=A/2A$. Then $G$ is a Polish group in which every element has order at most $2$ (a so called \emph{Boolean group}), so it cannot be a topological J-group by Lemma \ref{lemma_order_of_w}.
    However, the group is contractible as the following homotopy shows:
    \[
       H \colon [0,1]\times G \rightarrow G ,\ (s,f) \mapsto f\cdot\mathbbm 1_{[0,s]}\,,    
    \]
    where $\mathbbm 1_{[0,s]}$ denotes the indicator function (characteristic function) of the interval $[0,s]$.
  \end{example}
  Note that the group $A:=L^1([0,1], \mathbb{Z})$ from Example \ref{example_boolean_group} which was used to construct the contractible counter-example is a topological J-group by Theorem \ref{thm_neeb}.
  It serves as a famous counter-example in infinite-dimensional Lie theory as it is a contractible (hence pathwise connected) closed subgroup of the Banach space $L^1([0,1], \mathbb{R})$ but has a trivial Lie algebra (see \cite[Example 14.52]{ProLieBook} and \cite[Exercise E7.17]{CompBook}).
  
  The theorems we developed so far are not enough to decide whether topological groups like $( \mathbb{Z}/3 \mathbb{Z})\times \mathrm{SO}_{3}\!\left(   \mathbb{R}    \right)$ are topological J-groups, as the underlying space is not pathwise connected (so Theorem \ref{thm_HOMOTOPY} does not apply) and the number of components is odd (so Corollary \ref{corollary_connected_component} does not apply). Furthermore, the group is not abelian, so Theorem \ref{thm_COMPACT} cannot be applied either. We will now introduce a possibility to show that $( \mathbb{Z}/3 \mathbb{Z})\times \mathrm{SO}_{3}\!\left(   \mathbb{R}    \right)$ and similar groups are not topological J-groups:
  
  \begin{theorem} \ \\ \label{thm_odd_homotopy_groups}
    Let $G$ be a topological J-group and let $m\in \mathbb{N}_0$ be given. If all the homotopy groups
    \[
       \pi_0(G),\pi_1(G),\ldots,\pi_m(G)
    \]
    are finite, then the order of all these groups must be odd.
  \end{theorem}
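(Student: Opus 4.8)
The plan is to run an induction on $m$ that parallels the proof of Theorem \ref{thm_HOMOTOPY}, with the rôle of \emph{trivial} replaced by \emph{finite of odd order}. Concretely, for each $m\in\N_0$ I would consider the proposition
\[
Q(m):\iff\ \text{For every topological J-group } G:\ \bigl(\pi_0(G),\ldots,\pi_m(G)\ \text{finite}\implies\text{all of odd order}\bigr),
\]
and prove $\forall m\in\N_0:Q(m)$ by induction. For the base case $Q(0)$, the group $\pi_0(G)=\spaceOfPathComp{G}$ is a J-group by Proposition \ref{proposition_pi_0}(b); if it is finite, then its order is odd by Corollary \ref{corollary_connected_component}(b).

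For the inductive step I would assume $Q(m)$ and let $G$ be a topological J-group with $\pi_0(G),\ldots,\pi_{m+1}(G)$ finite. As in the proof of Theorem \ref{thm_HOMOTOPY}, the free loop group $C(\T,G)$ is again a topological J-group by Proposition \ref{proposition_mapping_J_groups}, and the splitting $C(\T,G)\approx\Omega(G)\times G$ together with $\pi_n(\Omega(G))=\pi_{n+1}(G)$ yields, for every $0\le n\le m$,
\[
\abs{\pi_n(C(\T,G))}=\abs{\pi_{n+1}(G)}\cdot\abs{\pi_n(G)}.
\]
In particular $\pi_0(C(\T,G)),\ldots,\pi_m(C(\T,G))$ are finite, so applying the induction hypothesis $Q(m)$ to the topological J-group $C(\T,G)$ shows that $\abs{\pi_n(C(\T,G))}$ is odd for each $0\le n\le m$. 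Since a product of positive integers is odd only if both factors are, I would conclude that $\abs{\pi_{n+1}(G)}$ and $\abs{\pi_n(G)}$ are odd for all $0\le n\le m$; reading off $n=0,1,\ldots,m$ then gives that $\abs{\pi_0(G)},\abs{\pi_1(G)},\ldots,\abs{\pi_{m+1}(G)}$ are all odd, which is exactly $Q(m+1)$.

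The one point requiring care — and the analogue of the main step in Theorem \ref{thm_HOMOTOPY} — is that the decomposition $C(\T,G)\approx\Omega(G)\times G$ should be invoked only at the level of \emph{cardinalities} of homotopy groups, so that no naturality of the group structure on the higher $\pi_n$ is needed, and that the induction hypothesis is applied to the \emph{larger} group $C(\T,G)$, which is what produces the degree shift $\pi_n\leadsto\pi_{n+1}$. Given the facts already established, I expect no substantial obstacle beyond carefully tracking the finiteness of the intermediate homotopy groups $\pi_n(C(\T,G))$ so that $Q(m)$ is genuinely applicable.
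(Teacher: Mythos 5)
Your proof is correct, but it takes a genuinely different route from the paper's. The paper argues in one shot: it iterates the free loop functor, establishing a binomial-type homeomorphism $\Loop^m(G)\approx\prod_{k=0}^m\bigl(\Omega^k(G)\bigr)^{\binom{m}{k}}$, notes that $\Loop^m(G)\cong C(\T^m,G)$ is a topological J-group by Proposition \ref{proposition_mapping_J_groups}, and then applies Corollary \ref{corollary_connected_component}(b) a single time to $\Loop^m(G)$: its number of path components, $\prod_{k=0}^m\abs{\pi_k(G)}^{\binom{m}{k}}$, is finite and hence odd, so each factor $\abs{\pi_k(G)}$ is odd. You instead run an induction on $m$ that parallels the paper's proof of Theorem \ref{thm_HOMOTOPY}: one application of the loop functor per step, with the induction hypothesis applied to the larger J-group $C(\T,G)$, and the single splitting $C(\T,G)\approx\Omega(G)\times G$ producing the degree shift. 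The underlying pillars are the same three facts (Proposition \ref{proposition_mapping_J_groups}, the space-level splitting, and Corollary \ref{corollary_connected_component}(b), which for you enters only in the base case $Q(0)$ via Proposition \ref{proposition_pi_0}(b)), but the organization differs: your induction avoids the iterated homeomorphism entirely---precisely the step the paper dismisses with \anf{one easily checks}---at the cost of tracking finiteness of $\pi_0(C(\T,G)),\ldots,\pi_m(C(\T,G))$ through the induction, which you handle correctly since $\abs{\pi_n(C(\T,G))}=\abs{\pi_{n+1}(G)}\cdot\abs{\pi_n(G)}$ is finite for $n\le m$, and oddness of a product forces oddness of both factors. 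Both arguments are sound; yours is somewhat easier to verify in complete detail, while the paper's makes the quantitative combinatorial structure (the binomial exponents) explicit in a single application of the oddness criterion.
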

  \begin{proof}
    Just for this proof, let us use the notation
    \[
       \Lambda(G):=C( \mathbb{T},G)
    \]
    for the \emph{free loop group} of $G$.
    Recursively, we use the notation $ \Lambda^0(G):=G$ and $ \Lambda^{k+1}(G):= \Lambda( \Lambda^k(G))$.
    We use analog notation for $\Omega^k(G)$ for $k\in \mathbb{N}_0$.
    
    Now, using the idea of the proof of Theorem \ref{thm_HOMOTOPY}, we observe that
    $ \Lambda(G)$ is \emph{homeomorphic} to $\Omega(G)\times G$.
    Similarly, we get the homeomorphism
    \begin{align*}
      \Lambda^2(G)
      &\approx  \Lambda(\Omega(G)\times G)
      \approx  \Lambda(\Omega(G)\times  \Lambda(G)
      \\&\approx \Omega^2(G)\times \Omega(G)\times \Omega (G) \times G
      = \Omega^2(G)\times \bigl(\Omega(G)\bigr)^2\times G\,.
    \end{align*}
    One easily checks that iterated application of $ \Lambda$ yields a binomial theorem-like formula as follows:
    \[
       \Lambda^m(G) \approx \prod_{k=0}^m \bigl(  \Omega^k(G)    \bigr)^{\binom{m}{k}}\,.
    \]
    One should keep in mind, that this is only a homeomorphism and does not in general preserve the group structure.
    
    By iterated application\footnote{Alternatively, one observes that $ \Lambda^m(G)$ is isomorphic to $C( \mathbb{T}^m,G)$ as a topological group and therefore we only have to apply Proposition \ref{proposition_mapping_J_groups} once.} of Proposition \ref{proposition_mapping_J_groups}, we see that $ \Lambda^m(G)$ is a topological J-group.
    Now, we determine the number of path-components:
    \[
       \left| \pi_0\left(    \Lambda^m(G)   \right) \right|       =\prod_{k=0}^m    \left| \pi_0(\Omega^k(G)) \right|^{\binom{m}{k}}
       =\prod_{k=0}^m  \left| \pi_k(G) \right|^{\binom{m}{k}}\,.
    \]
    By assumption, this number is finite and therefore has to be odd by Corollary \ref{corollary_connected_component}. Hence, every factor $ \left| \pi_k(G) \right|$ is odd.
  \end{proof}
  
  \begin{corollary}
    For every finite group $H$ and every $n\geq3$, the topological group $H\times  \mathrm{SO}_{n}\!\left(   \mathbb{R}    \right)$ is not a topological J-group.
  \end{corollary}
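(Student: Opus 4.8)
The plan is to apply Theorem~\ref{thm_odd_homotopy_groups} with the modest choice $m=1$, so that I only need to control $\pi_0$ and $\pi_1$ of the group $G:=H\times\SO{n}{\R}$. First I would observe that, being finite, $H$ is discrete as a topological space, so all of its higher homotopy groups vanish while $\pi_0(H)\cong H$; on the other hand $\SO{n}{\R}$ is connected, so $\pi_0(\SO{n}{\R})$ is trivial. Using that homotopy groups of a finite product split as the corresponding product of homotopy groups, I obtain
\[
 \pi_0(G)\cong H \qquad\text{and}\qquad \pi_1(G)\cong\pi_1(\SO{n}{\R}).
\]
In particular $\pi_0(G)$ is finite of order $\abs{H}$; note that this is precisely the role of the hypothesis that $H$ be finite, since Theorem~\ref{thm_odd_homotopy_groups} requires \emph{all} of $\pi_0(G),\ldots,\pi_m(G)$ to be finite before it can be invoked.

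The key external input is the classical computation $\pi_1(\SO{n}{\R})\cong\Z/2\Z$ for every $n\geq3$: the spin group $\mathrm{Spin}(n)$ is a nontrivial connected double cover of $\SO{n}{\R}$, and for the base case $n=3$ one has the familiar identification $\SO{3}{\R}\cong\mathbb{RP}^3$. Hence $\pi_1(G)$ is finite of order exactly $2$.

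With both $\pi_0(G)$ and $\pi_1(G)$ now known to be finite, I would argue by contradiction: if $G$ were a topological J-group, then Theorem~\ref{thm_odd_homotopy_groups} applied with $m=1$ would force both $\abs{\pi_0(G)}$ and $\abs{\pi_1(G)}$ to be odd. But $\abs{\pi_1(G)}=2$ is even, a contradiction. Therefore $H\times\SO{n}{\R}$ is not a topological J-group.

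There is no genuine obstacle here; the result is a direct specialization of Theorem~\ref{thm_odd_homotopy_groups}, and the only substantive ingredient is the standard determination of $\pi_1(\SO{n}{\R})$. This same fact also explains why the range $n\geq3$ is the natural one: $\SO{1}{\R}$ is trivial and $\pi_1(\SO{2}{\R})\cong\Z$ is infinite, so for $n\leq2$ the group $\SO{n}{\R}$ falls outside the finiteness hypothesis and the argument does not apply.
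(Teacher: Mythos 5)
Your proof is correct and follows exactly the paper's own argument: take $m=1$, identify $\pi_0(G)\cong H$ and $\pi_1(G)\cong\pi_1(\SO{n}{\R})\cong\Z/2\Z$, and invoke Theorem \ref{thm_odd_homotopy_groups} to get a contradiction from the even order of $\pi_1(G)$. The additional remarks on the Spin double cover and on why $n\geq 3$ is needed are fine supplementary details but do not change the route.
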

  \begin{proof}
    Set $G:=H\times  \mathrm{SO}_{n}\!\left(   \mathbb{R}    \right)$ and let $m=1$. 
    Then $\pi_0(G)\cong H$ is a finite group.
    Furthermore, we have $\pi_1(G)=\pi_1( \mathrm{SO}_{n}\!\left(   \mathbb{R}    \right))\cong \mathbb{Z}/2 \mathbb{Z}$ (see e.g. \cite[Section 3.D; Exercise 2]{Hatcher}) which is again finite.
    
    Since the order of $\pi_1(G)$ is even, the group $G$ cannot be a topological J-group by Theorem \ref{thm_odd_homotopy_groups}.
  \end{proof}

  We conclude this section with the statement and proof of Theorem \ref{thm_neeb} which (together with Corollary \ref{corollary_contractible_lie_groups}) 
  can be seen as a partial converse of Theorem \ref{thm_HOMOTOPY}. 
  The author owes this theorem to Karl-Hermann Neeb who kindly gave it to him in a private communication.  
  
  We briefly recall the definition of a (locally trivial) fibre bundle:
  \begin{definition}
    Let $F\neq\emptyset$ be a topological space.
    A \emph{fibre bundle} with \emph{typical fibre} $F$ is a continuous map $ \pi \colon E \rightarrow B$ from the \emph{total space} $E$ to the \emph{base space} $B$ such that the following local triviality property is satisfied:
    For each $b\in B$ there is an open neighborhood $U\subseteq B$ of $b$ and an homeomorphism $ \phi \colon \pi^{-1}(U) \rightarrow U\times F$ such that for each $x\in\pi^{-1}(U)$ we have $\pi(x)=\pi_1(\phi(x))$, where $ \pi_1 \colon U\times F \rightarrow U$ denotes the projection onto the first component.
    
    We will use the words \emph{bundle}, \emph{fibre bundle} and \emph{local trivial fibre bundle} interchangeably.
    
    If the typical fibre $F$ is discrete, a bundle is called a \emph{covering map}.
  \end{definition}

  The following fact can be found in       \cite[§11, Lemme 4]{Dixmier}:
  \begin{lemma}\label{lemma_dixmier}
    Let $ \pi \colon E \rightarrow B$ be a locally trivial fibre bundle with typical fibre $F$. Assume that $F$ is contractible and that $B$ is paracompact. Then there exists a continuous global section, i.e. a continuous map $ s \colon B \rightarrow E$ such that $\pi\circ s =  \mathrm{id}_B$.
  \end{lemma}
  
  \begin{theorem}\label{thm_neeb}
    Let $G$ be a topological group such that the underlying space is paracompact and contractible. Furthermore, let $w\in G$ be an element such that the cyclic subgroup $ \left\langle  w   \right\rangle$ is infinite and discrete in the induced topology. Then there is a continuous map $ f \colon G \rightarrow G$ such that $(G,w,f)$ is a topological J-group.
  \end{theorem}
  \begin{proof}
    Let $E:=G\times G$ denote the product of $G$ with itself.
    The projection onto the first factor $ \pi_1 \colon E \rightarrow G$ is    \    a (trivial) fibre bundle with typical fibre $G$.
    
    It is easy to check that the map
    \[
       \phi \colon E \rightarrow E ,\ (u,v) \mapsto (uw,vu)    
    \]
    is a homeomorphism of the total space $E$. We therefore get an action of the discrete group $ \mathbb{Z}$ on $E$ via
    \[
       k.(u,v)=\phi^k(u,v)\,.
    \]
    This action is compatible with the fibre bundle structure in the following sense:
    \[
       \pi_1(k.(u,v)) = \pi_1(u,v)\cdot w^k\,.
    \]
    Since the group element $w\in G$ has infinite order by assumption, it follows that the homeomorphism $\phi$ has infinite order as well.
    
    This allows us to construct a map $ \widetilde{\pi_1} \colon E/ \mathbb{Z} \rightarrow G/ \left\langle  w   \right\rangle$ such that the following diagram commutes:
    \[
       \xymatrix{
       E \ar[d]_{\pi_1}\ar[rr]^{q_\phi} &&E/ \mathbb{Z}  \ar[d]_{\widetilde{\pi_1}}   \\
       G \ar[rr]_{q_w}&& G/ \left\langle  w   \right\rangle\,,
       }
    \]
    where $ q_\phi \colon E \rightarrow E/ \mathbb{Z}$ and $ q_w \colon G \rightarrow G/ \left\langle  w   \right\rangle$ denote the canonical quotient maps.
    
    Since---by assumption---the group $ \left\langle  w   \right\rangle$ is infinite and discrete in $G$, the map $ \widetilde{\pi_1} \colon E/ \mathbb{Z} \rightarrow G/ \left\langle  w   \right\rangle$ becomes a fibre bundle with typical fibre $G$ as well.
    
    The base space $G/ \left\langle  w   \right\rangle$ of this new bundle is paracompact since a paracompact group modulo a discrete subgroup is paracompact (see \cite[Corollary 1.5]{AntonyanQuotients}). The typical fibre is $G$ which is contractible by assumption. Hence, we can apply Lemma \ref{lemma_dixmier} to $\widetilde{\pi_1}$ and obtain a global section $ \widetilde s \colon G/ \left\langle  w   \right\rangle \rightarrow E/ \mathbb{Z}$.

    As a next step, we take a closer look at the quotient map $ q_\phi \colon E \rightarrow E/ \mathbb{Z}$. By construction it is clear that is a covering map.

    The composition $ \widetilde s\circ q_w \colon G \rightarrow E/ \mathbb{Z}$ is a continuous map defined on the simply connected space $G$ with values in the space $E/ \mathbb{Z}$ which has $E$ as a covering space. Therefore, we may lift this map to a new map $ s \colon G \rightarrow E$.
    
    \[
       \xymatrix{G\ar[d]_{s} \ar[rr]^{q_w} && G/ \left\langle  w   \right\rangle\ar[d]_{\widetilde s}\ar@/^2pc/[dd]^{ \mathrm{id}_{G/ \left\langle  w   \right\rangle}} \\
       E\ar[d]^{\pi_1}\ar[rr]_{q_\phi} && E/ \mathbb{Z} \ar[d]_{\widetilde{\pi_1}}         \\
       G \ar[rr]^{q_w} && G/ \left\langle  w   \right\rangle}
    \]
    
    The map $ s \colon G \rightarrow E=G\times G ,\ x \mapsto (s_1(x),s_2(x)$ has two component maps $s_1,s_2$. Using that $G$ is connected and $ \left\langle  w   \right\rangle$ is discrete, we can use $q_w\circ s_1 = q_w$ to conclude that there is a $\ell\in \mathbb{Z}$ such that $s_1(x)=xw^\ell$ for all $x\in G$.
    
    Finally, we will show that for all $x\in G$ we have
    \[
       s_2(xw)=s_2(x)x
    \]
    which will show that $(G,w,s_2)$ is a topological J-group.
    
    To this end, let $x\in G$ be given. 
    Since $[xw]_w=[x]_w$, we may apply $\widetilde s$ on both sides and obtain:
    \begin{align*}
      [xw]_w                   &= [x]_w                \\
      \widetilde s ([xw]_w)    &=\widetilde s([x]_w)   \\
      \left[  ( s_1(xw),s_2(xw) )   \right]_\phi &
      = \left[ (  s_1(x),s_2(x) )   \right]_\phi         \\
      \left[  ( xw^{\ell+1},s_2(xw) )   \right]_\phi &
      = \left[ (  x^\ell,s_2(x) )   \right]_\phi \,.        \\
    \end{align*}
    This means that there is an integer $k\in \mathbb{Z}$ such that
    \[
       (xw^{\ell+1},s_2(xw)) = \phi^k(x^\ell,s_2(x))\,.
    \]
    Since in the first component $\phi$ just multiplies by $w$ from the right, it follows that $k=1$ and hence
    \[
       (xw^{\ell+1},s_2(xw))=(xw^{\ell+1},s_2(x)x)\,.       \qedhere
    \]
    
  \end{proof}
  
  As a direct corollary, we obtain the following classification of topological J-groups inside the class of connected Lie groups which then will have Theorem \ref{thm_NILPOTENT} as a special case:
  \begin{corollary} \label{corollary_contractible_lie_groups}
    For a connected Lie group $G$, the following are equivalent:
    \begin{itemize}
     \item [(i)] $G$ is a topological J-group.
     \item [(ii)] The underlying space of $G$ is contractible.
     \item [(iii)] There is a simply connected solvable Lie group $H$ and an integer $k\in \mathbb{N}_0$ such that $G$ decomposes as a semidirect product
      \[
         G = H\rtimes \left(\widetilde{ \mathrm{SL}_{2}\!\left(   \mathbb{R}    \right)}\right)^k\,,
      \]
      where $\widetilde{ \mathrm{SL}_{2}\!\left(   \mathbb{R}    \right)}$ is the universal cover of the special linear group $ \mathrm{SL}_{2}\!\left(   \mathbb{R}    \right)$.
    \end{itemize}
    Furthermore, every nontrivial element can occur as a witness of $G$.
  \end{corollary}
  \begin{proof}
    (i)$\implies$(ii):\\
    Let $G$ be a connected Lie group which is a topological J-group. 
    Since connected Lie groups are automatically pathwise connected, we may apply Theorem \ref{thm_HOMOTOPY} to conclude that $G$ is weakly contractible and hence (as every Lie group has the homotopy type of a CW-complex) contractible by Whitehead's Theorem (See e.g. \cite[Theorem 4.5]{Hatcher}).
    
    (ii)$\implies$(iii):\\
    A contractible Lie group has the following Levi-decomposition:
    \[
       G = H\rtimes \left(\widetilde{ \mathrm{SL}_{2}\!\left(   \mathbb{R}    \right)}\right)^k
    \]
    where $H$ is a solvable Lie group (the radical of $G$), $k\in \mathbb{N}_0$ is a nonnegative integer and $\widetilde{ \mathrm{SL}_{2}\!\left(   \mathbb{R}    \right)}$ is the universal cover of the matrix group $ \mathrm{SL}_{2}\!\left(   \mathbb{R}    \right)$ which is the only contractible simple Lie group.\footnote{For a proof see \cite[Lemma 4.3]{HofmannNeeb}.}
    
    (iii)$\implies$(ii):\\
    A simply connected solvable group is always diffeomorphic to some Euclidean space (see \cite[Theorem 14.4.1]{HilgertNeeb}) and since $\widetilde{ \mathrm{SL}_{2}\!\left(   \mathbb{R}    \right)}$ is diffeomorphic to $ \mathbb{R}^3$, it follows that $G$ is diffeomorphic (and therefore) homeomorphic to some Euclidean space. So $G$ is contractible.
    
    (ii)$\implies$(i):\\
    By the Manifold Splitting Theorem (see \cite[Theorem 14.3.11]{HilgertNeeb}) the group $G$ is diffeomorphic to a product of some Euclidean space and a maximal compact subgroup. Since $G$ is contractible, this implies that the maximal compact subgroup has to be trivial, so $G$ does not contain any nontrivial compact subgroups.
    
    Let $w\in G\setminus  \left\{ 1_G \right\} $ be an arbitrary element. 
    By Weil's Lemma (see \cite[Proposition 7.43]{CompBook}) the group $ \left\langle  w   \right\rangle$ either has a compact closure in $G$ or is topologically isomorphic to the discrete group $ \mathbb{Z}$. Since we already know that the only compact subgroup is trivial and $w\neq1_G$, we are left with the case $ \left\langle  w   \right\rangle\cong  \mathbb{Z}$. Since a Lie group is always metrizable and therefore paracompact, the claim follows from Theorem \ref{thm_neeb}.
  \end{proof}
  
  Since every nilpotent Lie group is in particular solvable, Theorem \ref{thm_NILPOTENT} stated in the introduction directly follows from Corollary \ref{corollary_contractible_lie_groups}.

  \begin{example}\label{example_pronilpotent_group_with_trivial_center}
    Theorem \ref{thm_neeb} can also be used to give an example of a connected topological J-group with a trivial center. For each $n\in \mathbb{N}$, consider the nilpotent Lie group of strictly upper triangular matrices
    \[
       T_n:= 
       \begin{pmatrix}
         1&*&\cdots&*        \\
         0&1&\cdots&*        \\
         \vdots& &\ddots& \vdots  \\
         0&0  &\cdots      &1    
       \end{pmatrix}\subseteq \mathrm{GL}_{n}\!\left(   \mathbb{R}    \right)\,.
    \]
    The projective limit of the system $ \left( T_n \right)_{n\in \mathbb{N}}$ can be identified with the group of infinite upper triangular $( \mathbb{N}\times \mathbb{N})$-matrices.
    This is a pronilpotent pro-Lie group (see e.g. \cite[Theorem 52]{ProLieBook} for precise definitions of these terms) with trivial center, so there is no obvious candidate for a witness $w$. However, the group is contractible and contains many discrete infinite cyclic subgroups. Therefore it is a topological J-group by Theorem \ref{thm_neeb}.
    
    There are many other important (but more complicated) classes of pronilpotent pro-Lie groups with trivial center, e.g. the Butcher group from numerical analysis (see e.g.  \cite{Butcher}).

  \end{example}

 \section{Compact Abelian J-Groups} 
  \label{section_compact}
  In this section we will show Theorem \ref{thm_COMPACT}, namely that compact metrizable abelian topological J-groups are profinite.
  In order to do so, we will use the concept of an exponent sequence:
  \begin{definition}    \label{definition_exponent_sequence}
    Let $G$ be any topological group and $x\in G$. A sequence $ \left( n_k \right)_{k\in \mathbb{N}}$ in $ \mathbb{Z}\setminus   \left\{ 0 \right\} $ is called an \emph{exponent sequence} for the element $x$ if
    $x^{n_k}\to 1_G$.
    If there exists one fixed sequence $ \left( n_k \right)_{k\in \mathbb{N}}$ in $ \mathbb{Z}\setminus  \left\{ 0 \right\} $ that works for every $x\in G$, we call this sequence a \emph{pointwise exponent sequence} for $G$.
  \end{definition}
  \begin{remark}    \label{remark_constant_exponent_sequence}
    It is clear that for an element $x\in G$ of finite order $n\in \mathbb{N}$, the constant sequence $ \left( n_k=n \right)_{k\in \mathbb{N}}$ is an exponent sequence for $x$. 
  \end{remark}
  
  \begin{proposition}           \label{proposition_exponent_sequences_in_compact_groups}
    Let $G$ be a compact metrizable topological group. Then every element has an exponent sequence.
  \end{proposition}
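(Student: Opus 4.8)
The plan is to exploit sequential compactness. Since $G$ is compact and metrizable, it is sequentially compact, so every sequence in $G$ has a convergent subsequence. Given $x\in G$, I would first look at the sequence of powers $\seqnN{x^n}$ and extract a convergent subsequence $x^{m_k}\to y$ for some $y\in G$, where $(m_k)_{k\in\N}$ is a strictly increasing sequence of non-negative integers.

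The key step is then to pass to differences of exponents. Setting $n_k:=m_{k+1}-m_k$, continuity of the group multiplication and of inversion gives
\[
 x^{n_k} = x^{m_{k+1}}\cdot \left( x^{m_k} \right)^{-1} \longrightarrow y\cdot y^{-1} = 1_G
\]
as $k\to\infty$. Because $(m_k)_{k\in\N}$ is strictly increasing, each $n_k=m_{k+1}-m_k$ is a positive integer and hence lies in $\Z\setminus\smset{0}$. Thus $\seqk{n_k}$ is an exponent sequence for $x$, which is exactly what we need. Note that this argument is uniform in the order of $x$: if $x$ has finite order, the sequence $\seqnN{x^n}$ takes only finitely many values and some value recurs infinitely often, so the convergent (indeed eventually constant) subsequence exists trivially; if $x$ has infinite order, the subsequence is furnished by compactness. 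In both cases the construction of the $n_k$ is identical, so there is no need to treat the cases separately.

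The only points requiring care — and they constitute the mildest possible obstacle — are, first, ensuring that the exponents produced are genuinely nonzero, which is handled automatically by taking the index sequence $(m_k)_{k\in\N}$ strictly increasing so that every difference is at least $1$; and second, invoking continuity of multiplication and inversion in the correct order to pass from $x^{m_k}\to y$ to $x^{m_{k+1}}\left(x^{m_k}\right)^{-1}\to 1_G$. Metrizability enters only to upgrade compactness to sequential compactness; no further structure of $G$ (in particular, no commutativity) is used.
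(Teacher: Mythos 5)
Your proof is correct, but it takes a genuinely different route from the paper. The paper splits into cases: for $x$ of finite order it uses the constant sequence, and for $x$ of infinite order it passes to the monothetic subgroup $\overline{\langle x\rangle}$, takes a countable neighborhood basis $\seqk{V_k}$ of the identity (this is where metrizability enters), argues that each $V_k\setminus\smset{1_G}$ is open and non-empty (since otherwise the group would be discrete, hence finite, contradicting infinite order), and then uses density of $\langle x\rangle$ to pick $x^{n_k}\in V_k\setminus\smset{1_G}$ with $n_k\neq 0$. You instead use metrizability to upgrade compactness to sequential compactness, extract a convergent subsequence $x^{m_k}\to y$, and apply the classical recurrence (difference) trick: $x^{m_{k+1}-m_k}=x^{m_{k+1}}\cdot\left(x^{m_k}\right)^{-1}\to y\cdot y^{-1}=1_G$, where the identity $x^{m_{k+1}}\cdot x^{-m_k}=x^{m_{k+1}-m_k}$ is unproblematic because powers of a single element commute even in a non-abelian group. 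Your argument buys a uniform treatment of the finite- and infinite-order cases, avoids the density and non-discreteness considerations entirely, and yields strictly positive exponents $n_k\geq 1$; the paper's argument, on the other hand, produces exponents landing in a prescribed neighborhood basis, which makes the convergence $x^{n_k}\to 1_G$ visibly tied to the topology of $G$ itself rather than to an auxiliary limit point $y$. Both proofs use metrizability in an essential but different way (countable neighborhood basis versus sequential compactness), and neither uses commutativity.
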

  \begin{proof}
    Let $x\in G$. If $x$ is of finite order, then by Remark \ref{remark_constant_exponent_sequence}, the element $x$ has an exponent sequence. Let us assume now that $x$ is of infinite order, i.e. the subgroup $\langle x \rangle$ generated by $x$ is infinite.
    Let $K:=\overline{\langle x \rangle}\subseteq G$ be the closed subgroup generated by $x$ (the so called \emph{monothetic subgroup} generated by $x$).
    Since closed subgroups of compact metrizable spaces are compact and metrizable, we may assume without loss of generality, that $G=K$ and that $\langle x \rangle$ is dense in $G$.
    The identity in $G$ has a countable neighborhood basis $ \left( V_k \right)_{k\in \mathbb{N}}$ where each $V_k$ is an open subset of $G$ containing $1_G$. Since $G$ is Hausdorff, $V_k\setminus  \left\{ 1_G \right\} $ is open as well.
    
    For each $k\in \mathbb{N}$, the set $V_k\setminus  \left\{ 1_G \right\} $ is nonempty, since otherwise $  \left\{ 1_G \right\} $ would be open and $G$ therefore discrete, which is impossible since a discrete compact space is always finite and $G$ contains the infinite subset $\langle x \rangle$.
    
    Since $\langle x\rangle=  \left\{ x^n \ :\ n\in \mathbb{Z}\right\} $ is dense in $G$ and each $V_k\setminus  \left\{ 1_G \right\} $ is open and nonempty, there is an $n_k\in \mathbb{Z}$ such that $x^{n_k}\in V_k\setminus  \left\{ 1_G \right\} $. Since $x^0=1_G$, we know that $n_k\neq 0$ and since $ \left( V_k \right)_{k\in \mathbb{N}}$ is a neighborhood basis of $1_G$, it follows that $ \left( x^{n_k} \right)_{k\in \mathbb{N}}$ converges to $1_G$.
  \end{proof}
  By Proposition \ref{proposition_exponent_sequences_in_compact_groups} we know now that every element in the circle group $ \mathbb{T}$ has an exponent sequence. However, it is not true that one can find one exponent sequence that simultaneously works for all elements:
  \begin{proposition}       \label{proposition_circle_has_no_pointwise_exponent_sequence}
    The circle group $ \mathbb{T}= \mathbb{R}/ \mathbb{Z}$ does not have a pointwise exponent sequence.
  \end{proposition}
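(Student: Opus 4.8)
The plan is to argue by contradiction, passing from the topological convergence in $\T$ to an analytic statement about the unitary characters $t\mapsto e^{2\pi i t}$ and then exploiting the fact that an exponential of nonzero integer frequency integrates to zero over one period. First I would suppose that $(n_k)_{k\in\N}$ in $\Z\setminus\smset{0}$ is a pointwise exponent sequence for $\T=\R/\Z$. Writing the group additively and choosing for each element a representative $t\in[0,1)$, the defining property $n_k\cdot x\to 0$ in $\T$ for every $x$ says that for each real $t$ the distance from $n_k t$ to $\Z$ tends to $0$. Applying the continuous character $\smfunc{\chi}{\T}{S^1}\subseteq\C$, $\chi([t])=e^{2\pi i t}$, this is exactly the statement that $e^{2\pi i n_k t}\to 1$ as $k\to\infty$, for every $t\in[0,1]$.

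Next I would integrate over a period. Each function $g_k(t):=e^{2\pi i n_k t}$ is measurable with $\abs{g_k}\equiv 1$, so the constant function $1$ is an integrable dominating function on $[0,1]$. Since $g_k\to 1$ pointwise, the dominated convergence theorem gives $\int_0^1 g_k(t)\,dt\to\int_0^1 1\,dt=1$. On the other hand, because $n_k$ is a nonzero integer, $\int_0^1 e^{2\pi i n_k t}\,dt=0$ for every $k$, so the left-hand side is the constant sequence $0$. This contradicts convergence to $1$, and the contradiction shows that no pointwise exponent sequence for $\T$ can exist.

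The computation is short, so the genuine point requiring care — the hard part — is the interchange of limit and integral. The convergence $n_k t\to 0$ is only \emph{pointwise} in $t$ (indeed, by Proposition \ref{proposition_exponent_sequences_in_compact_groups} each individual element has an exponent sequence, but the present statement asserts that no single sequence works for all elements simultaneously, so the convergence cannot be upgraded to uniform). Consequently one cannot argue via uniform convergence and must invoke dominated convergence, which is available precisely because all the characters are bounded in modulus by $1$. The conceptual core of the argument is the orthogonality of a nonconstant character to the constant function $1$ on $\T$.
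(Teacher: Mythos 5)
Your proof is correct and is essentially identical to the paper's own argument: both apply the character $[t]_\Z\mapsto e^{2\pi i t}$ to turn the assumed pointwise exponent sequence into pointwise convergence $e^{2\pi i n_k t}\to 1$, then invoke dominated convergence (with the constant dominating function $1$) to contradict the fact that $\int_0^1 e^{2\pi i n_k t}\,dt=0$ for nonzero integers $n_k$. No substantive differences to report.
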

  \begin{proof}
    Although this is a direct consequence of the fact that $ \mathbb{Z}$ with its Bohr topology is sequentially discrete (see \cite[Lemma 1]{Reid1967}), we will give a proof here---which is very different\footnote{This proof also shows that there is not even an exponent sequence that converges almost everywhere.} from the one in \cite{Reid1967}.
    Let us consider the continuous function
    \[
       \gamma \colon  \mathbb{R}/ \mathbb{Z} \rightarrow  \mathbb{C} ,\ [t]_ \mathbb{Z} \mapsto e^{2\pi i t}\,.    
    \]
    Assume $ \left( n_k \right)_{k\in \mathbb{N}}$ is a pointwise exponent sequence for $ \mathbb{T}$. 
    Then for each $[t]_ \mathbb{Z}\in \mathbb{R}/ \mathbb{Z}$ we have that $ \left( [n_k t]_ \mathbb{Z} \right)_{k\in \mathbb{N}}$ converges to $[0]_ \mathbb{Z}$ and by continuity of $\gamma$ we obtain that
    $\left(\gamma([t]_ \mathbb{Z})\right)^{n_k} =  \left( e^{2\pi i n_k t} \right)_{k\in \mathbb{N}}$ converges in $ \mathbb{C}$ to $1$.
    
    This shows that $ \left( \gamma^{n_k} \right)_{k\in \mathbb{N}}$ converges pointwise to the constant $1$-function. Since $ \left| \gamma^{n_k}(t) \right|=1$ for all $[t]\in \mathbb{T}$ and $k\in \mathbb{N}$, we may apply Lebesgue's Theorem of Dominated Convergence (see e.g. \cite[Theorem 106 in Chapter 2]{HandbookOfMeasureTheory}) to $ \left( \gamma^{n_k} \right)_{k\in \mathbb{N}}$ and obtain that
    \[
       \lim_{k\to\infty} \int_0^1 e^{2\pi i n_k t} dt = \int_0^1 1 dt\,.
    \]
    Since every $n_k$ is nonzero, the integrals on the left hand side are all $0$, while the right hand side integral is equal to $1$. This contradiction completes the proof.
  \end{proof}
  Now we still have to connect the idea of exponent sequences to the concept of topological J-groups:
  \begin{proposition}       \label{proposition_J_group_exponent_sequence}
    Let $(G,w,f)$ be a topological J-group with $w$ in the center of $G$.
    A sequence $ \left( n_k \right)_{k\in \mathbb{N}}$ in $ \mathbb{Z}\setminus  \left\{ 0 \right\} $ is a pointwise exponent sequence for $G$ if and only if $ \left( n_k \right)_{k\in \mathbb{N}}$ is an exponent sequence for $w$.
  \end{proposition}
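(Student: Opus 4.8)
The plan is to prove the two implications separately, the forward one being immediate and the reverse one being the substantive part.

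First, suppose $\seqk{n_k}$ is a pointwise exponent sequence for $G$. Since $w\in G$, the defining condition applied to the element $w$ itself gives $w^{n_k}\to 1_G$, so $\seqk{n_k}$ is an exponent sequence for $w$. This direction uses nothing but the definitions.

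For the converse, I would assume $w^{n_k}\to 1_G$, fix an arbitrary $x\in G$, and show $x^{n_k}\to 1_G$. Because $w$ lies in the center, every element of $G$ commutes with $w$, so Lemma \ref{lemma_x_plus_nw} applies to every $x$ and to $1_G$ alike. From the lemma I would first solve for the power $x^n$, obtaining
\[
 x^n = f(x)^{-1}\cdot f(x\cdot w^n)\cdot w^{-\binom{n}{2}}.
\]
The term $w^{-\binom{n}{2}}$ is the obstacle: although $w^{n_k}\to 1_G$, the exponent $\binom{n_k}{2}$ grows quadratically in $n_k$, so there is no reason for $w^{\binom{n_k}{2}}$ to converge to $1_G$ on its own, and one cannot hope to control it directly. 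This is the hard part.

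The key idea is to remove this term using the J-group relation a second time. Applying Lemma \ref{lemma_x_plus_nw} with $x=1_G$ yields $f(w^n)=f(1_G)\cdot w^{\binom{n}{2}}$, whence $w^{-\binom{n}{2}}=f(w^n)^{-1}\cdot f(1_G)$ (the quantity $w^{\binom{n}{2}}$ is a power of the central element $w$, so no commutation issues arise). Substituting this into the previous display gives
\[
 x^n = f(x)^{-1}\cdot f(x\cdot w^n)\cdot f(w^n)^{-1}\cdot f(1_G),
\]
an identity in which the problematic quadratic exponent has disappeared and only $f$ evaluated at the sequences $x\cdot w^n$ and $w^n$ remains. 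Setting $n=n_k$ and letting $k\to\infty$, I would use $x\cdot w^{n_k}\to x$ and $w^{n_k}\to 1_G$, together with the continuity of $f$, of multiplication, and of inversion, to conclude
\[
 x^{n_k} \longrightarrow f(x)^{-1}\cdot f(x)\cdot f(1_G)^{-1}\cdot f(1_G) = 1_G.
\]
Since $x\in G$ was arbitrary, $\seqk{n_k}$ is then a pointwise exponent sequence for $G$, which completes the proof.
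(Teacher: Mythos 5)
Your proof is correct and follows essentially the same route as the paper's: both apply Lemma \ref{lemma_x_plus_nw} twice, once at $x=1_G$ to neutralize the quadratic term $w^{\binom{n_k}{2}}$ and once at a general $x$, then conclude by continuity of $f$ and of the group operations. The only difference is presentational --- you eliminate $w^{\binom{n_k}{2}}$ algebraically into a single identity $x^{n_k}=f(x)^{-1}f(x\cdot w^{n_k})f(w^{n_k})^{-1}f(1_G)$ and take one limit, whereas the paper first proves $w^{\binom{n_k}{2}}\to 1_G$ and then passes to the limit in the general formula.
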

  \begin{proof}
    Assume $ \left( n_k \right)_{k\in \mathbb{N}}$ is an exponent sequence for $w$. 
    If we apply Lemma \ref{lemma_x_plus_nw} to $x=1_G$ we have that
    \[
       f(w^{n_k}) = f(1_G)\cdot w^{\binom{n_k}{2}}\,.
    \]
    Since $ \left( w^{n_k} \right)_{k\in \mathbb{N}}$ converges to $1_G$ in $G$ and since $f$ is continuous, this implies that the sequence $ \left( w^{\binom{n_k}{2}} \right)_{k\in \mathbb{N}}$ converges to $1_G$ in $G$.
    
    Apply now Lemma \ref{lemma_x_plus_nw} to an arbitrary $x\in G$ to obtain
    \[
       f(x\cdot w^{n_k}) = f(x)\cdot x^{n_k}\cdot w^{\binom{n_k}{2}}\,.
    \]
    Since $ \left( w^{n_k} \right)_{k\in \mathbb{N}}$ and $ \left( w^{\binom{n_k}{2}} \right)_{k\in \mathbb{N}}$ both converge to $1_G$, we can deduce that $ \left( x^{n_k} \right)_{k\in \mathbb{N}}$ also converges to $1_G$.
    This shows that $ \left( n_k \right)_{k\in \mathbb{N}}$ is a pointwise exponent sequence for $G$. The other implication is clear.
  \end{proof}
  In order to prove Theorem \ref{thm_COMPACT} we need to reduce the case of an arbitrary compact metrizable group to the circle group:
  
  \begin{proposition}           \label{proposition_compact_exponent_sequence_implies_profinite}
    A compact topological group\footnote{In an earlier arXiv-version of this article, this proposition was only shown for abelian groups using Pontryagin-duality. This new argument is more general and works without the commutativity assumption.} with a pointwise exponent sequence is profinite. 
  \end{proposition}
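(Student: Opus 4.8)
The plan is to argue by contradiction, reducing any putative counterexample to the circle and invoking Proposition \ref{proposition_circle_has_no_pointwise_exponent_sequence}. The whole argument rests on two elementary permanence properties of pointwise exponent sequences that I would record first. If $\seqk{n_k}$ is a pointwise exponent sequence for $G$ and $H\leq G$ is any subgroup (with the subspace topology), then for $x\in H$ the relation $x^{n_k}\to 1_G$ already takes place inside $H$, so $\seqk{n_k}$ is a pointwise exponent sequence for $H$; and if $q\colon G\to Q$ is a continuous surjective homomorphism, then for $y=q(x)\in Q$ we have $y^{n_k}=q(x^{n_k})\to q(1_G)=1_Q$, so $\seqk{n_k}$ descends to $Q$. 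Thus pointwise exponent sequences pass both to subgroups and to continuous homomorphic images.

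With these in hand, suppose $G$ is compact with pointwise exponent sequence $\seqk{n_k}$ but is not profinite. Since a compact group is profinite exactly when it is totally disconnected, the identity component $G_0$ is then a nontrivial compact connected group, and by the subgroup property $\seqk{n_k}$ is a pointwise exponent sequence for $G_0$. By the structure theory of compact groups (see e.g. \cite{CompBook}), $G_0$ is a projective limit of its Lie group quotients $G_0/N$, the corresponding closed normal subgroups $N$ having trivial intersection; were all these quotients trivial we would have $\bigcap_N N = G_0\neq\smset{1_G}$, a contradiction. Hence some quotient $L:=G_0/N$ is a nontrivial compact connected Lie group, and $\seqk{n_k}$ is a pointwise exponent sequence for $L$ by the quotient property. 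A nontrivial compact connected Lie group has rank at least one, so its maximal torus is nontrivial; choosing a one-dimensional subtorus gives a closed subgroup of $L$ topologically isomorphic to the circle $\T$. Restricting $\seqk{n_k}$ to this circle (subgroup property again) produces a pointwise exponent sequence for $\T$, contradicting Proposition \ref{proposition_circle_has_no_pointwise_exponent_sequence} and finishing the proof.

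The main obstacle is not any single estimate---once the reduction is set up the contradiction is immediate---but rather invoking the structure theory correctly so as to genuinely avoid the commutativity hypothesis. The subtlety is that a nontrivial compact connected group need \emph{not} contain a circle as a subgroup (the $p$-adic solenoid is the cautionary example: its arc components are dense lines, not circles), so one cannot hope to find $\T$ inside $G_0$ directly. The fix is precisely to pass to a nontrivial Lie quotient first and only then extract a circle as a maximal torus; this two-step quotient-then-subgroup maneuver is what makes the argument work uniformly both for the abelian case (e.g. the solenoid, which surjects onto $\T$) and for the semisimple case (e.g. $\SU{2}{\C}$, which contains $\T$ but admits no nontrivial abelian quotient). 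Verifying that the projective-limit decomposition into Lie quotients applies to $G_0$, and that a nontrivial compact connected Lie group always has a nontrivial maximal torus, are the two structural inputs to check carefully.
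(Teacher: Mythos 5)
Your proof is correct and takes essentially the same route as the paper: decompose the compact group into its Lie group quotients via the projective-limit structure theorem, push the pointwise exponent sequence to a positive-dimensional (equivalently, nontrivial connected) compact Lie quotient, extract a circle subgroup from a maximal torus, and contradict Proposition \ref{proposition_circle_has_no_pointwise_exponent_sequence}. The only cosmetic difference is that you pass to the identity component $G_0$ before forming Lie quotients, whereas the paper quotients $G$ directly and then uses that the identity component of a positive-dimensional Lie quotient is open and of positive dimension.
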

  \begin{proof}
    Consider the set of all closed normal subgroups $N\subseteq G$ such that $G/N$ is a Lie group. 
    By \cite[Corollary 2.43]{CompBook}, the group $G$ is the projective limit of all those Lie group quotients.
    If all Lie group quotients are $0$-dimensional, they are finite and the group $G$ is a projective limit of finite groups, and therefore profinite.
    
    Assume by contradiction that there is at least one Lie group quotient $L:=G/N$ of dimension at least $1$. The given pointwise exponent sequence of $G$ is also a pointwise exponent sequence for $L$.
    
    The identity component of the Lie group $L$ is open in $L$ which implies that the identity component is of positive dimension as well. 
    By \cite[Theorem 6.30]{CompBook}, the identity component is the union of all maximal tori, so in particular, there is a torus $T\subseteq L$ of positive dimension contained in $G$. Since every torus of positive dimension contains a torus of dimension $1$ as a closed subgroup, this implies that there is a copy of the circle group $ \mathbb{T}= \mathbb{R}/ \mathbb{Z}$ inside $L$ and therefore $ \mathbb{T}$ has a pointwise exponent sequence which is impossible by Proposition \ref{proposition_circle_has_no_pointwise_exponent_sequence}.
  \end{proof}
  
  We finish this section with the promised proof of Theorem \ref{thm_COMPACT}:
  \begin{proof}[Proof of Theorem \ref{thm_COMPACT}]
    Let $(G,w,f)$ be a compact abelian metrizable topological J-group.
    Since $G$ is compact and metrizable, we get an exponent sequence $ \left( n_k \right)_{k\in \mathbb{N}}$ for the witness $w\in G$ by Proposition \ref{proposition_exponent_sequences_in_compact_groups}. 
    Since $G$ is abelian, the witness lies in the center and so, by Proposition \ref{proposition_J_group_exponent_sequence}, this sequence is a pointwise exponent sequence for $G$. 
    Finally, by Proposition \ref{proposition_compact_exponent_sequence_implies_profinite}, the group $G$ is profinite.
  \end{proof}

 \section{Topological J-Rings and modules}
  \label{section_j_rings}
  A natural source for abelian topological groups is to consider additive groups of topological rings.
  \begin{definition}\label{definition_j_ring}
    A \emph{topological J-ring} $(R,f)$ is a topological ring $R$ together with a continuous self-map $ f \colon R \rightarrow R$ such that
    $((R,+),1_R,f)$ is a topological J-group. 
    Here $(R,+)$ denotes the additive group of the ring $R$.
  \end{definition}
  Recall that all rings are unital.
  
  \begin{remark}    \label{remark_products_of_J_rings}
    It is easy to see that arbitrary products of topological J-rings are topological J-rings.
  \end{remark}
  
  \begin{proposition}\label{lemma_binomial_j_ring}
    Let $R$ be a topological ring. Then each of the following statements implies the next one:
    \begin{enumerate}[(i)]
     \item The characteristic of $R$ is an odd number.
     \item The element $2_R:=1_R+1_R$ is invertible in $R$.
     \item The ring $R$ is a topological J-ring.
     \item The group $(R,+)$ is a topological J-group.
     \item The characteristic of $R$ is an odd number or zero.
    \end{enumerate}
    In particular, for topological rings with positive characteristic we have the equivalence:
    \[
       R \text{ is a topological J-ring}\iff  \mathrm{char}\left(R\right)\text{ is odd}\,. 
    \]
    
  \end{proposition}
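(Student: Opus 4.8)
The plan is to prove the cyclic chain of implications (i) $\Rightarrow$ (ii) $\Rightarrow$ (iii) $\Rightarrow$ (iv) $\Rightarrow$ (v) one link at a time, and then read off the claimed equivalence for positive characteristic directly from these links. For (i) $\Rightarrow$ (ii), suppose $\chara{R} = n$ is odd. Then $\gcd(2,n)=1$, so by Bézout there are integers $a,b$ with $2a+nb=1$. I would set $u:=a\cdot 1_R$ and check $2_R\cdot u = (2a)\cdot 1_R = (1-nb)\cdot 1_R = 1_R - b\,(n\cdot 1_R) = 1_R$, using $n\cdot 1_R = 0$. Since $u$ is an integer multiple of $1_R$ it is central, so $u\,2_R = 2_R\,u = 1_R$ and $2_R$ is invertible.

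For (ii) $\Rightarrow$ (iii) the candidate map is the \emph{second binomial coefficient} $f(x):=2_R^{-1}\,x\,(x-1_R)=\binom{x}{2}$. Continuity is automatic, as $f$ is a composite of the (continuous) ring operations together with left multiplication by the fixed element $2_R^{-1}$. The one thing to verify is the defining identity of a J-group for the additive group with witness $w=1_R$, namely $f(x+1_R)=f(x)+x$. I would first record that $2_R=1_R+1_R$ is central, hence so is its inverse $2_R^{-1}$, which lets the computation proceed exactly as over a commutative ring: expanding gives $f(x+1_R)=2_R^{-1}(x^2+x)$ and $f(x)+x = 2_R^{-1}(x^2-x)+x = 2_R^{-1}(x^2+x)$, where the last equality uses $(1_R-2_R^{-1})x = 2_R^{-1}x$. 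The implication (iii) $\Rightarrow$ (iv) is then immediate from Definition \ref{definition_j_ring}, since a topological J-ring is by definition a ring whose additive group, together with witness $1_R$ and the map $f$, is a topological J-group.

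The key step is (iv) $\Rightarrow$ (v), and this is where I expect the only genuine idea to sit. Assume $(R,+)$ is a topological J-group with some witness $w$ and continuous $f$; I only need to treat the case $\chara{R}=n>0$, since otherwise (v) holds because the characteristic is zero. The crucial observation is that positive characteristic makes $(R,+)$ a torsion group: every $r\in R$ satisfies $n\cdot r = (n\cdot 1_R)\,r = 0$. In particular the witness $w$ has finite order, so Lemma \ref{lemma_order_of_w} applies. As $(R,+)$ is abelian, $1_R$ commutes with $w$, and the lemma yields both that the order of $w$ is odd and that the order of $1_R$ divides it; being a divisor of an odd number, the order of $1_R$ is odd. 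But that order is exactly $\chara{R}=n$, so $n$ is odd.

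Finally, for the ``in particular'' equivalence in the case of positive characteristic: the forward direction is (iii) $\Rightarrow$ (iv) $\Rightarrow$ (v), where a positive characteristic constrained to be ``odd or zero'' is forced to be odd; the reverse direction is (i) $\Rightarrow$ (iii). The main obstacle is entirely the torsion observation that unlocks Lemma \ref{lemma_order_of_w} in step (iv) $\Rightarrow$ (v); every other link is routine algebra once the centrality of $2_R^{-1}$ is noted.
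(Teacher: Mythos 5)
Your proof is correct and follows essentially the same route as the paper: the same binomial-coefficient map $f(x)=2_R^{-1}x(x-1_R)$ for (ii)$\implies$(iii), and the same appeal to Lemma \ref{lemma_order_of_w} (via the witness commuting with $1_R$ in the abelian group $(R,+)$) for (iv)$\implies$(v). The only cosmetic differences are your use of B\'ezout instead of the paper's explicit inverse $(k+1)1_R$ for (i)$\implies$(ii), and phrasing the key step as \emph{the order of $1_R$ divides the odd order of $w$} rather than the paper's \emph{the witness has maximal order $c$}, which are equivalent arguments.
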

  \begin{proof}
    Let $c:= \mathrm{char}\left(R\right)\in \mathbb{N}_0$.
    If $c=2k+1$ with $k\in  \mathbb{N}_0$, then the element $(k+1)1_R$ is the multiplicative inverse of $2_R$. This shows that (i) implies (ii).
    
    If $2_R$ is invertible, then the binomial coefficient map
    \[
       f \colon R \rightarrow R ,\ x \mapsto \binom{x}{2}=(2_R)^{-1}\cdot x\cdot (x-1_R)    
    \]
    is well-defined and continuous.
    It satisfies
    \[
       f(x+1_R)  = (2_R)^{-1}(x+1_R)\cdot x 
       = (2_R)^{-1}(x-1_R)\cdot x+x
       = f(x) + x\,.
    \]
    This shows that (iii) follows from (ii).
    
    Since for the implication (iii)$\implies$(iv) there is nothing to show, we now assume (iv), i.e. that $((R,+),w,f)$ is a topological J-group and show that $ \mathrm{char}\left(R\right)$ is zero or an odd number.
    
    If $ \mathrm{char}\left(R\right)=0$, we are finished, so assume that $c= \mathrm{char}\left(R\right)>0$. This implies that $c$ is the maximal order of all elements of $(R,+)$. So by Lemma \ref{lemma_order_of_w} the order of $w$ is equal to $c$ and---again by Lemma \ref{lemma_order_of_w}---this is an odd number.
  \end{proof}
  
  \begin{remark}
    In Lemma \ref{lemma_binomial_j_ring} the implications  ``(i)$\implies$(ii)'' and  ``(ii)$\implies$(iii)'' are no equivalences as the examples $R= \mathbb{R}$ and $R= \mathbb{Z}$ show. 
    It is not known to the author if the implication from (iii) to (iv) cannot be reversed or if (iii) and (iv) are actually equivalent.
    
    The implication  ``(iv)$\implies$(v)'' cannot be reversed as the following instructive example shows:
    Consider the ring $R:= \mathbb{Z}_3\times ( \mathbb{Z}/2 \mathbb{Z})$ which has characteristic $0$. We will see that $R$ is not a topological J-ring. We assume by contradiction that $(R,f)$ was a topological J-ring and apply Lemma \ref{lemma_x_plus_nw} to $x=0_R$ and $n=2\cdot 3^k$ where $k\in \mathbb{N}$:
    \begin{align*}
      f( 2\cdot 3^k \cdot 1_R) 
      &=f( 0_R + 2\cdot 3^k \cdot 1_R) 
      \\&=f(0_R) + \underbrace{2\cdot 3^k \cdot 0_R}_{=0_R} + \underbrace{\binom{2\cdot 3^k}{2}}_{=3^k(2\cdot 3^k-1)} 1_R
      \\&=f(0_R) +3^k(2\cdot 3^k-1)\cdot\left(1,\left[1\right]_2   \right)
      \\&=f(0_R) +\left( 3^k(2\cdot 3^k-1), \left[3^k(2\cdot 3^k-1)\right]_2   \right)
      \\&=f(0_R) +\left( 3^k(2\cdot 3^k-1), [1]_2   \right)\,.
    \end{align*}
    Now, since $ \left( 3^k \right)_{k\in \mathbb{N}}$ converges to $0$ in $ \mathbb{Z}_3$, we can use continuity of the map $f$ and deduce that
    $f(2\cdot 3^k \cdot 1_R)=f(2\cdot 3^k,[0]_2)$ converges to $f(0_R)$. However, $f(0_R) +\left( 3^k(2\cdot 3^k-1), [1]_2   \right)$ converges to $f(0_R)+(0,[1]_2)$ which is a different value. So, we arrive at a contradiction.
    
    Note that the similar looking ring $ \mathbb{Z}_2\times( \mathbb{Z}/2 \mathbb{Z})$ is a topological J-ring although it has elements of order $2$.
    
    There are also topological rings of characteristic $0$, where the author does not know whether they are topological J-rings or not. My favourite example (see Question \ref{question_INFINITE_PRODUCT}) is
    \[
       R = \prod_{j=1}^\infty  \mathbb{Z}/(2^k \mathbb{Z})\,.
    \]
    Interestingly, it contains the $2$-adic integers $ \mathbb{Z}_2$ as a closed subring which is a topological J-ring by Example \ref{example_J_ring}.
  \end{remark}
  For the case of fields, the situation is much easier than for rings:
  \begin{lemma} \label{lemma_j_field}
    A topological field is a topological J-ring if and only if the characteristic is not equal to $2$.
  \end{lemma}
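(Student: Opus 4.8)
The plan is to deduce the result almost entirely from Proposition \ref{lemma_binomial_j_ring}, using the one extra feature a field offers over a general ring: every nonzero element is invertible.

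For the forward implication I would argue that $\chara{K} \neq 2$ forces condition (ii) of Proposition \ref{lemma_binomial_j_ring}. Indeed, the characteristic of a field is either $0$ or a prime, so $\chara{K} \neq 2$ means $2_K = 1_K + 1_K$ is nonzero (in characteristic $0$ this is clear, and if $\chara{K} = p$ is an odd prime then $2 < p$ gives $2_K \neq 0_K$). Being a nonzero element of a field, $2_K$ is invertible, which is exactly statement (ii). The implication (ii)$\implies$(iii) of that proposition then yields that $K$ is a topological J-ring, with the binomial map $x \mapsto (2_K)^{-1} x (x - 1_K)$ serving as the continuous self-map.

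For the converse I would use the contrapositive together with the tail of the same chain of implications. If $\chara{K} = 2$, then (v) of Proposition \ref{lemma_binomial_j_ring}---which asserts that the characteristic of a topological J-ring is $0$ or odd---fails, since $2$ is even and nonzero; hence (iii) fails and $K$ is not a topological J-ring. One could also see this directly: in characteristic $2$ every nonzero element of $(K,+)$ has additive order $2$, so by Lemma \ref{lemma_order_of_w} any witness would have to be $0_K$, whereupon Proposition \ref{proposition_trivial_J_group} would force $K$ to be trivial, contradicting $0_K \neq 1_K$.

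I do not expect any real obstacle here: the entire argument is bookkeeping on top of Proposition \ref{lemma_binomial_j_ring}, and the only genuinely field-theoretic input is the elementary fact that a nonzero element is invertible, which is precisely what upgrades the divisibility hypothesis, turning \anf{$\chara{K} \neq 2$} directly into the invertibility of $2_K$.
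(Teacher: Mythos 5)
Your proof is correct and takes essentially the same route as the paper: both reduce the lemma to Proposition \ref{lemma_binomial_j_ring}, using the field axiom that nonzero elements are invertible to get condition (ii), and the chain (iii)$\implies$(iv)$\implies$(v) (equivalently, the \anf{in particular} equivalence for positive characteristic) to exclude characteristic $2$. In fact your write-up is more complete than the paper's one-line proof, which only treats characteristic $0$ explicitly and leaves the odd-prime and characteristic-$2$ cases implicit; your alternative direct argument for the converse via Lemma \ref{lemma_order_of_w} and Proposition \ref{proposition_trivial_J_group} is also valid.
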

  \begin{proof}
    In a field $ \mathbb{K}$ of characteristic $0$, the element $2_ \mathbb{K}=1_ \mathbb{K}+1_ \mathbb{K}\neq0_ \mathbb{K}$ is invertible and therefore, the statement follows from Lemma \ref{lemma_binomial_j_ring}
  \end{proof}
  The following is an easy observation:
  \begin{lemma} \label{lemma_j_subring}
    Let $(R,f)$ be a topological J-ring and let $S$ be a subring of $R$ such that $f(S)\subseteq S$. Then $S$ is a topological J-ring, as well.\qed
  \end{lemma}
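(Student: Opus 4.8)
The plan is to simply verify the three ingredients required by Definition \ref{definition_j_ring} for the triple built from $S$ and the restricted map. First I would observe that $S$, being a subring of the topological ring $R$, is itself a topological ring when equipped with the subspace topology: addition and multiplication restrict to continuous operations, and inversion of the additive group restricts as well. Since all rings in this paper are unital, the subring $S$ contains the identity, so $1_S = 1_R \in S$, and this element will serve as the witness.

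Next I would set $g := f|_S$, the restriction of $f$ to $S$. The hypothesis $f(S) \subseteq S$ guarantees that $g$ is a well-defined self-map $\smfunc{g}{S}{S}$, and continuity of $g$ follows immediately from continuity of $f$, because the restriction of a continuous map to a subspace is continuous in the subspace topology.

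Finally I would check the defining J-group identity for $((S,+), 1_S, g)$. For any $x \in S$, the element $x + 1_R$ lies in $S$ (as $S$ is closed under addition and contains $1_R$), so the equation $f(x + 1_R) = f(x) + x$ valid in the ambient J-ring $R$ restricts verbatim to $g(x + 1_S) = g(x) + x$; here both $g(x) = f(x) \in S$ and $x \in S$, so the right-hand side indeed lies in $S$. This shows $((S,+), 1_S, g)$ is a topological J-group, hence $(S, g)$ is a topological J-ring.

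There is no genuine obstacle in this argument: the content is entirely a routine restriction, and the only points meriting a moment's attention are that a subring is taken to be unital (so that the witness $1_R$ lies in $S$) and that the condition $f(S) \subseteq S$ is exactly what is needed for the restricted map to have the correct codomain. This is why the statement is flagged as an easy observation.
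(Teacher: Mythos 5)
Your verification is correct and is exactly the routine restriction argument the paper has in mind: the paper itself gives no proof, marking the lemma as an easy observation with a \qed, and your check (witness $1_S=1_R$, restricted map well-defined and continuous by $f(S)\subseteq S$, identity inherited verbatim) is the intended content. Your remark that the subring must contain $1_R$ so that the witness lies in $S$ is the one genuinely necessary observation, and it matches the paper's convention that all rings (hence subrings, as in $\Z_2\subseteq\Q_2$) are unital.
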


  \begin{example}\label{example_J_ring}
    We will list a few examples of topological J-rings:
    \begin{itemize}
     \item The fields of real numbers $ \mathbb{R}$, complex numbers $ \mathbb{C}$ and $p$-adic rationals $ \mathbb{Q}_p$ (for every prime number $p$) have characteristic zero and hence are topological J-rings by Lemma \ref{lemma_j_field}.
     \item The ring of integers $ \mathbb{Z}$ with the discrete topology is a J-ring by Lemma \ref{lemma_j_subring}---or by an easy direct calculation---or by Proposition \ref{proposition_infinite_order_J_group}.
     \item For $m\in \mathbb{N}$ we have that $ \mathbb{Z}/m \mathbb{Z}$ is a topological J-ring if and only if $m$ is odd (Lemma \ref{lemma_binomial_j_ring}).
     \item For every prime number $p$ the topological ring of $p$-adic integers $ \mathbb{Z}_p$ is a  topological J-ring. 
      For $p\neq2$, this follows directly from Lemma \ref{lemma_binomial_j_ring}, since $2$ is invertible in $ \mathbb{Z}_p$. For $p=2$, the number $2$ is not invertible but we can use the fact that $ \mathbb{Q}_2$ is a topological J-ring and then apply Lemma \ref{lemma_j_subring}.
    \end{itemize}
  \end{example}
  
  We are now ready to prove Theorem \ref{thm_PROFINITE} stated in the introduction:
  \begin{proof}[Proof of Theorem \ref{thm_PROFINITE}]
    We begin with part (a):\\
    Let $G$ be a torsion-free abelian profinite group. Then by \cite[Theorem 4.3.3]{RibesZalesskii}, the group $G$ is topologically isomorphic to
    \[
       G \cong \prod_p \left(    \mathbb{Z}_p   \right)^{\mathfrak m(p)}\,,
    \]
    where the product ranges over all prime numbers and each $\mathfrak m(p)$ is some cardinal number.
    Since $ \mathbb{Z}_p$ is a topological J-ring for all prime numbers (including $p=2$) by Example \ref{example_J_ring}, the assertion follows from Remark \ref{remark_products_of_J_rings}.
    
    Now, we come to part (b):\\
    Let $G$ be a torsion abelian profinite group. Then by \cite[Corollary 4.3.9]{RibesZalesskii}, there is a finite set $\Pi$ of prime numbers and a natural number $e\in \mathbb{N}$ such that we have the isomorphism
    \[
       G \cong \prod_{p\in\Pi} \left( \prod_{i=1}^e\left(   \mathbb{Z}/p^i \mathbb{Z}   \right)^{\mathfrak m(p,i),}\right)\,,
    \]
    where each $\mathfrak m(p,i)$ is a cardinal which may be zero.

    Furthermore, \cite[Corollary 4.3.9]{RibesZalesskii} tells us that there is a finite exponent $N\in \mathbb{N}$ such that $x^N=1_G$ for all $x\in G$. We may choose $N\in \mathbb{N}$ to be minimal with this property.
    
    The exponent $N$ may be calculated as
    \[
       N := \prod_{p\in\Pi}p^{\max  \left\{ i\in    \left\{ 1,\ldots,e \right\}  \ :\ \mathfrak m(p,i)\neq0\right\} }\,, 
    \]
    where we set $\max\emptyset:=0$.
    
    Since every cardinal $\mathfrak m(p,i)$ is allowed to be $0$, we may assume that $2\in\Pi$ and rewrite the isomorphism as
    \[
       G \cong \underbrace{\prod_{i=1}^e\left(   \mathbb{Z}/2^i \mathbb{Z}   \right)^{\mathfrak m(2,i),}}_{R_2}\times \underbrace{ \prod_{p\in\Pi\setminus  \left\{ 2 \right\} } \left( \prod_{i=1}^e\left(   \mathbb{Z}/p^i \mathbb{Z}   \right)^{\mathfrak m(p,i),}\right)}_{R_{>2}}=R_2\times R_{>2}\,.
    \]
    The topological ring $R_{>2}$ is a topological J-ring by Lemma \ref{lemma_binomial_j_ring} and Remark \ref{remark_products_of_J_rings}. So everything depends on the first factor $R_2$.
    
    If the cardinals $\mathfrak m(2,1),\ldots,\mathfrak m(2,e)$ are all zero, then the exponent $N$ is an odd number and $R_2=1$ which means that $G$ is isomorphic to the additive group of topological J-ring and therefore $G$ is a topological J-group. Furthermore, in this case, the order of every element divides $N$ and therefore all elements of $G$ have odd order.
    
    If there is at least one cardinal $\mathfrak m(2,1),\ldots,\mathfrak m(2,e)$ which is nonzero, then there is an element of even order, and therefore, $N$ has to be an even number. By Lemma \ref{lemma_order_of_w}, an abelian group cannot be a topological J-group if the maximal order is an even number.
  \end{proof}
  Now that we dealt with topological J-rings (and fields), let us continue with modules (and vector spaces) over those:
  \begin{remark}
    Let $R$ be a topological ring. 
    Recall that a topological $R$-module is a module over the ring $R$, together with a topology such that it becomes an abelian topological group $M$ and that the scalar multiplication
    \[
       R\times M \rightarrow M : (r,v) \mapsto r\cdot v    
    \]
    is continuous.
    
    It would be nice if one could say that every topological $R$-module over a topological J-ring $R$ is a topological J-group.
    Unfortunately this is not true, as every abelian topological group is a $ \mathbb{Z}$-module, and $ \mathbb{Z}$ is a topological J-ring, but the group $ \mathbb{Z}/2 \mathbb{Z}$ is not a J-group, for example. However, the following is true:
  \end{remark}
  
  \begin{proposition} \label{proposition_module_projection}
    Let $(R,f_R)$ be a topological J-ring and let $M$ be a topological $R$-module. If there is an element $w\in M$ and a continuous function $  \phi \colon M \rightarrow R$
    such that 
    \[
       \phi(x+w)= \phi(x)+1_R\text{ holds for all x}\in M\,,
    \]
    then there is an $ f_M \colon M \rightarrow M$ such that $(M,w,f_M)$ is a topological J-group.
  \end{proposition}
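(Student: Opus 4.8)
The plan is to construct $f_M$ explicitly out of the data at hand: the ``coordinate'' $\mP$, the ring map $f_R$, and the scalar action of $R$ on $M$. The defining requirement is
\[
 f_M(x+w) = f_M(x) + x \qquad\text{for all } x\in M,
\]
so I only need to produce a continuous $\smfunc{f_M}{M}{M}$ whose increment under adding $w$ equals $x$. Since $\mP$ increases by exactly $1_R$ each time we add $w$, the natural first guess is the term $\mP(x)\cdot x$; the real task is to find a correction term that cancels the leftover error.

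First I would compute the increment of this guess. Using $\mP(x+w)=\mP(x)+1_R$, distributivity of the scalar action, and $1_R\cdot x = x$, one finds
\[
 \mP(x+w)\cdot(x+w)-\mP(x)\cdot x = x + \mP(x)\cdot w + w,
\]
so the guess overshoots the target $x$ by the $w$-valued error $\mP(x)\cdot w + w = \mP(x+w)\cdot w$. To absorb it I would add a correction of the form $k(\mP(x))\cdot w$ for some $\smfunc{k}{R}{R}$ chosen so that $k(t+1_R)-k(t) = -(t+1_R)$ for every $t\in R$. Here the J-ring relation $f_R(t+1_R)=f_R(t)+t$ is precisely what is needed to solve this difference equation: the choice $k(t) = -f_R(t)-t$ works, since then $k(t+1_R)-k(t) = -(f_R(t+1_R)-f_R(t)) - 1_R = -t-1_R$. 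Collecting terms yields the candidate
\[
 f_M(x) \coloneq \mP(x)\cdot(x-w) - f_R(\mP(x))\cdot w.
\]

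It then remains to verify directly that this $f_M$ satisfies the J-group identity, which is a short computation: substitute $x+w$, apply $\mP(x+w)=\mP(x)+1_R$ and $f_R(\mP(x)+1_R)=f_R(\mP(x))+\mP(x)$, and compare with $f_M(x)+x$ term by term, using only distributivity, $1_R\cdot x = x$, and commutativity of the additive group of $M$. Continuity of $f_M$ is immediate, since $\mP$, $f_R$, the scalar multiplication $R\times M\to M$, and the group operations on $M$ are all continuous. I expect the only genuine obstacle to be pinning down the correct correction term---the passage from the naive guess $\mP(x)\cdot x$ to the closed form above---where one must exploit the J-ring relation for $f_R$ to solve the difference equation, and must keep the non-commutativity of $R$ in mind so that every product is read as a left scalar multiplication. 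Once the formula is in hand, both the verification and the continuity check are routine.
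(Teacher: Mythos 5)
Your proof is correct and takes essentially the same approach as the paper: construct $f_M$ explicitly from $\mP$, $f_R$ and the scalar action, then verify the identity $f_M(x+w)=f_M(x)+x$ by a direct computation using $\mP(x+w)=\mP(x)+1_R$ and the J-ring relation $f_R(t+1_R)=f_R(t)+t$. Your formula $f_M(x)=\mP(x)(x-w)-f_R(\mP(x))\,w$ is not literally the paper's $f_M(x)=\mP(x)\bigl(x-\mP(x)w\bigr)+f_R(\mP(x))\,w$, but both satisfy the defining identity (two solutions of this difference equation may differ by any function invariant under $x\mapsto x+w$), so the discrepancy is immaterial.
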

  \begin{proof}
    We set
    \[
       f_M \colon M \rightarrow M ,\ x \mapsto  \phi(x)\bigl( x- \phi(x)w\bigr)+f_R( \phi(x))w    
    \]
    and check that the defining property of a topological J-group is satisfied:
    \begin{align*}
      f_M(x+w)  &=     \phi(x+w)\bigl( x+w- \phi(x+w)w\bigr)
      +f_R( \phi(x+w))w
      \\&=    ( \phi(x)+1_R)\bigl( x+w-( \phi(x)+1_R)w\bigr)+f_R( \phi(x)+1_R)w
      \\&=    ( \phi(x)+1_R)\bigl( x- \phi(x)w\bigr)
      +(f_R( \phi(x))+ \phi(x))w
      \\&=     \phi(x)\bigl( x- \phi(x)w\bigr)+x- \phi(x)w
      +f_R( \phi(x))w+ \phi(x)w
      \\&=     \phi(x)\bigl( x- \phi(x)w\bigr)+f_R( \phi(x))w + x
      \\&=    f_M(x) + x\,. \qedhere
    \end{align*}
  \end{proof}
  
  Proposition \ref{proposition_module_projection} has an analog for nilpotent Lie algebras:
  In a nilpotent Lie algebra $ \mathfrak{g}$ the \emph{Baker-Campbell-Hausdorff} series (\emph{BCH}-series) which exists as a formal power series on each Lie algebra (provided that the characteristic of the base field is zero) is just a polynomial map that is defined everywhere:
  \[
     * \colon  \mathfrak{g}\times \mathfrak{g} \rightarrow  \mathfrak{g} ,\ (x,y) \mapsto x*y:=x+y+\frac12[x,y]+\cdots  
  \]
  See \cite[Definition 1, Ch. II, §6]{MR1728312} or \cite[Definition IV.1.3]{MR2261066} for a precise definition of this series.
  
  It is well-known that a nilpotent Lie algebra, together with this BCH-multiplication becomes a group with neutral element $0$ and inversion $- \mathrm{id}_ \mathfrak{g}$.
  
  By definition of the BCH-series, it is clear that $x*y=x+y$ if $[x,y]=0$.
  
  If the Lie algebra $ \mathfrak{g}$ is further a topological Lie algebra, this group $( \mathfrak{g},*)$ is a topological group as the map $ * \colon  \mathfrak{g}\times \mathfrak{g} \rightarrow  \mathfrak{g}$ is clearly continuous.

  \begin{proposition}\ \\\label{proposition_nilpotent_J_group_lie_algebra_version}
    Let $ \mathfrak{g}$ be a nilpotent topological Lie algebra over a topological field $ \mathbb{K}$ of characteristic $0$. Let $w\in \mathfrak{g}$ be an element in the center of $ \mathfrak{g}$. Assume furthermore that there is a continuous map $  \phi \colon  \mathfrak{g} \rightarrow  \mathbb{K}$ such that
    \[
       (\forall x\in \mathfrak{g})\  \phi(x+w)= \phi(x)+1_ \mathbb{K}\,.
    \]
    Then the topological group $( \mathfrak{g},*)$ is a topological J-group with witness $w$.
  \end{proposition}
  \begin{proof}
    Let us define
    \[
       f \colon  \mathfrak{g} \rightarrow  \mathfrak{g} ,\ x \mapsto  \phi(x)(x- \phi(x)w)+\binom{ \phi(x)}{2}w\,.
    \]
    Since $w$ is in the center of $ \mathfrak{g}$, we have $[w,x]=0$ for all $x\in \mathfrak{g}$. 
    It follows by bilinearity of the Lie bracket that
    \[
       [f(x),x]=0 \quad \text{ for all }x\in  \mathfrak{g}\,.
    \]
    In particular, we have 
    \[
       x*w=x+w\quad \and \quad f(x)*x=f(x)+x\quad\text{ for all } x\in \mathfrak{g}\,.
    \]
    With this in mind, it suffices to check that
    \[
       f(x+w)=f(x)+w\,,
    \]
    which is an easy calculation, identical to that in the proof of Proposition \ref{proposition_module_projection}.
  \end{proof}
  
  \begin{remark}
    Since every simply connected nilpotent Lie group is of the form $( \mathfrak{g},*)$ of Proposition \ref{proposition_nilpotent_J_group_lie_algebra_version} and has a nontrivial center, this gives an alternative proof of Theorem \ref{thm_NILPOTENT}.
  \end{remark}

  \begin{lemma}\label{lemma_ring_times_module}
    Let $(R,f)$ be a topological J-ring and let $M$ be a topological $R$-module.    
    Then the product $R\times M$ is a topological J-group with witness $(1_R,0_M)$ and self-map
    \[
       \widetilde f \colon R\times M \rightarrow R\times M ,\ r,v \mapsto (f(r),r\cdot v)\,.    
    \]
    
  \end{lemma}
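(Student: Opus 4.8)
The plan is to recognize this lemma as a direct instance of Proposition \ref{proposition_module_projection}. The first step is to regard the product $R\times M$ not merely as an abelian topological group but as a topological $R$-module, with $R$ acting diagonally via $r\cdot(s,v)=(rs,\,r\cdot v)$. This action is continuous because both the ring multiplication on $R$ and the module action on $M$ are continuous, so $R\times M$ is precisely the kind of object to which Proposition \ref{proposition_module_projection} applies, with the role of the ambient module played by $R\times M$ and the ring still being $R$.

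Next I would take the projection onto the first coordinate as the auxiliary map. Let $\smfunc{\mP}{R\times M}{R}$ be given by $\mP(r,v)=r$; it is continuous, and with witness $w:=(1_R,0_M)$ it satisfies
\[
 \mP\bigl((r,v)+w\bigr)=\mP(r+1_R,v)=r+1_R=\mP(r,v)+1_R,
\]
which is exactly the hypothesis required in Proposition \ref{proposition_module_projection}. That proposition then produces a continuous self-map $f_M$ for which $(R\times M,w,f_M)$ is a topological J-group, so the only thing left is to identify $f_M$ with the stated map $\widetilde f$.

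To do so I would substitute $\mP(r,v)=r$ into the formula $f_M(x)=\mP(x)\bigl(x-\mP(x)w\bigr)+f(\mP(x))w$ from Proposition \ref{proposition_module_projection} (here the J-ring map $f$ plays the role of $f_R$). Using $r\cdot(1_R,0_M)=(r,0_M)$ one gets $(r,v)-(r,0_M)=(0_R,v)$, hence $r\cdot(0_R,v)=(0_R,r\cdot v)$, while $f(r)\cdot(1_R,0_M)=(f(r),0_M)$; adding these yields $(f(r),\,r\cdot v)$, which is precisely $\widetilde f(r,v)$. This closes the identification.

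I do not expect a genuine obstacle. The one point deserving a line of justification is that the diagonal action really makes $R\times M$ a topological $R$-module, so that Proposition \ref{proposition_module_projection} is legitimately applicable; the rest is the short computation above. Alternatively, one could bypass the proposition entirely and verify the defining identity by hand: continuity of $\widetilde f$ holds because its first coordinate is $f$ composed with the projection $R\times M\to R$ and its second coordinate is the module action, while the J-group identity $\widetilde f\bigl((r,v)+w\bigr)=\widetilde f(r,v)+(r,v)$ reduces immediately to the ring relation $f(r+1_R)=f(r)+r$ together with $1_R\cdot v=v$.
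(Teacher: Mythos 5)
Your proof is correct and follows essentially the same route as the paper: the paper's proof also views $R\times M$ as a topological $R$-module and applies Proposition \ref{proposition_module_projection} with the first-coordinate projection. Your explicit check that the resulting self-map coincides with the stated $\widetilde f$ is a detail the paper leaves implicit, but it is the same argument.
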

  \begin{proof}
    The product $R\times M$ is a topological $R$-module and the projection onto the first component satisfies the assumptions of Proposition \ref{proposition_module_projection}. Alternatively, one can check directly that the given map satisfies the J-group property.
  \end{proof}
  
  If we apply Lemma \ref{lemma_ring_times_module} to the ring $ \mathbb{Z}$, we obtain that for every abelian topological group $G$, the product
  \[
     \mathbb{Z}\times G
  \]
  is a topological J-group. It turns out that for this to hold, we do not need $G$ to be abelian:
  \begin{proposition}\label{proposition_Z_times_group}
    \begin{itemize}
     \item [(a)]     Let $G$ be a topological group and $w\in G$ be an element in the center of $G$.
      If there exists a continuous function $  \phi \colon G \rightarrow  \mathbb{Z}$ with $ \phi(x\cdot w)= \phi(x)+1$ for each $x\in G$, then $G$ is a topological J-group with witness $w$.
     \item [(b)] For every topological group $H$, the product $ \mathbb{Z}\times H$ is a topological J-group
      with witness $(1,1_H)$ and self-map
      \[
         f \colon  \mathbb{Z}\times H \rightarrow  \mathbb{Z}\times H ,\ (k,u) \mapsto \left(\binom{k}{2}, u^k\right)\,.
      \]
      
    \end{itemize}
  \end{proposition}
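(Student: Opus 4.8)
The plan for part (a) is to write down $f$ by an explicit formula and then verify the two required properties (continuity and the $J$-identity) directly. The guiding idea is that $\phi$ provides a global ``coordinate'' in the $w$-direction: for each $x$ the element $y := x\cdot w^{-\phi(x)}$ satisfies $\phi(y)=0$ (since iterating $\phi(x\cdot w)=\phi(x)+1$ gives $\phi(x\cdot w^m)=\phi(x)+m$ for all $m\in\Z$), so $y$ is a canonical representative of $x$ modulo $\langle w\rangle$. Declaring $f$ to be $1_G$ on the fibre $\phi^{-1}(0)$ and propagating it in the $w$-direction through the formula of Lemma \ref{lemma_x_plus_nw} suggests the definition
\[
 f(x) := \left(x\cdot w^{-\phi(x)}\right)^{\phi(x)}\cdot w^{\binom{\phi(x)}{2}}.
\]
This is a closed-form expression, so it is automatically well defined; the work is to check continuity and the $J$-group identity.

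For continuity I would use that $\phi\colon G\to\Z$ is a continuous map into a discrete space, hence locally constant, so the level sets $U_n:=\phi^{-1}(\smset{n})$ form a clopen partition of $G$. On each $U_n$ the map $f$ restricts to $x\mapsto \left(x\cdot w^{-n}\right)^{n}\cdot w^{\binom{n}{2}}$, which is continuous: it is the composition of right translation by the fixed element $w^{-n}$, the $n$-th power map $g\mapsto g^{n}$ (continuous in any topological group for every fixed $n\in\Z$, using continuity of inversion when $n<0$), and right translation by the fixed element $w^{\binom{n}{2}}$. Since continuity is a local property and the $U_n$ are open, $f$ is continuous on all of $G$.

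For the $J$-identity, set $n:=\phi(x)$ and $y:=x\cdot w^{-n}$, so that $f(x)=y^{n}w^{\binom{n}{2}}$ and $x=y\,w^{n}$. The crucial point is that $x\cdot w$ has the \emph{same} representative $y$, because $(x\cdot w)\cdot w^{-(n+1)}=x\cdot w^{-n}=y$, while $\phi(x\cdot w)=n+1$; hence $f(x\cdot w)=y^{\,n+1}w^{\binom{n+1}{2}}$. On the other hand, using that $w$ is central (so powers of $w$ commute with $y$),
\[
 f(x)\cdot x = y^{n}\,w^{\binom{n}{2}}\cdot y\,w^{n} = y^{\,n+1}\,w^{\binom{n}{2}+n} = y^{\,n+1}\,w^{\binom{n+1}{2}},
\]
where the last equality is the elementary identity $\binom{n}{2}+n=\binom{n+1}{2}$. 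Comparing the two expressions gives $f(x\cdot w)=f(x)\cdot x$, as required. Here centrality of $w$ is exactly what is needed to slide the factor $w^{\binom{n}{2}}$ past $y$.

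Finally, part (b) is the special case $G=\Z\times H$ with witness $w=(1,1_H)$, which lies in the centre $\Z\times Z(H)$, and $\phi$ the projection onto $\Z$; this $\phi$ is continuous and satisfies $\phi(x\cdot w)=\phi(x)+1$, so part (a) applies. It then remains to check that the general formula specializes to the stated map: for $x=(k,u)$ one has $y=x\cdot w^{-k}=(0,u)$, so $y^{k}=(0,u^{k})$ and $w^{\binom{k}{2}}=\bigl(\binom{k}{2},1_H\bigr)$, giving $f(k,u)=\bigl(\binom{k}{2},u^{k}\bigr)$ as claimed (and this can equally well be verified by hand). I expect the only genuine subtlety to be the continuity argument, where allowing the exponent $\phi(x)$ to vary is harmless precisely because $\phi$ is locally constant; everything else reduces to the binomial bookkeeping and the use of centrality.
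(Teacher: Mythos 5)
Your proposal is correct and follows essentially the same route as the paper: you define the identical self-map $f(x)=\bigl(x\cdot w^{-\mP(x)}\bigr)^{\mP(x)}\cdot w^{\binom{\mP(x)}{2}}$, verify the J-identity by the same binomial computation using centrality of $w$, and obtain (b) from (a) via the projection onto the $\Z$-factor, exactly as in the paper. Your explicit continuity argument (local constancy of $\mP$ and the clopen partition $\phi^{-1}(\smset{n})$) is a detail the paper leaves implicit, but it does not change the approach.
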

  \begin{proof}
    (a)\\
    The proof is more or less identical to the proof of Proposition \ref{proposition_module_projection}:
    We set
    \[
       f \colon G \rightarrow G ,\ x \mapsto \bigl( x\cdot w^{- \phi(x)}\bigr)^{ \phi(x)}\cdot w^{\binom{ \phi(x)}{2}}    
    \]
    and check that the defining property of a topological J-group is satisfied:
    \begin{align*}
      f(x\cdot w)  &=    \bigl( (x\cdot w)\cdot w^{- \phi(x\cdot w)}\bigr)^{ \phi(x\cdot w)}
      \cdot w^{\binom{ \phi(x+w)}{2}}
      \\  &=    \bigl( x\cdot w\cdot w^{- \phi(x)-1}\bigr)^{ \phi(x)+1}
      \cdot w^{\binom{ \phi(x)+1}{2}}
      \\  &=    \bigl( x\cdot  w^{- \phi(x)}\bigr)^{ \phi(x)+1}
      \cdot w^{\binom{ \phi(x)}{2}+ \phi(x)}
      \\  &=    \bigl( x\cdot  w^{- \phi(x)}\bigr)^{ \phi(x)}\cdot \bigl( x\cdot  w^{- \phi(x)}\bigr)
      \cdot w^{\binom{ \phi(x)}{2}}\cdot w^{ \phi(x)}
      \\  &=    \bigl( x\cdot w^{- \phi(x)}\bigr)^{ \phi(x)}\cdot x \cdot w^{\binom{ \phi(x)}{2}}
      \\  &=    \bigl( x\cdot w^{- \phi(x)}\bigr)^{ \phi(x)}\cdot w^{\binom{ \phi(x)}{2}}\cdot x
      \\  &= f(x)\cdot x\,.
    \end{align*}
    Note that in the very end, $x$ and $w^{\binom{ \phi(2)}{2}}$ commute since $w$ is assumed to be in the center of the group---a fact that is only used here.
    
    (b)\\
    Let $w:=(1,1_G)$, where $1$ denotes the number one in $ \mathbb{Z}$ and $1_G$ is the neutral element in $G$.
    Then the projection onto the first factor satisfies the hypothesis of part (a). Alternatively, one can check that the given formula for $f$ defines a J-group structure.
  \end{proof}
  
  \begin{remark}[Subgroups of $ \mathbb{R}$] \label{remark_subgroups_of_R}
    As an application of Proposition \ref{proposition_Z_times_group}(a) we will show that every subgroup $G$ of $( \mathbb{R},+)$ is a topological J-group with the induced topology and that every nonzero group element can be a witness. 
    Since we already know that $G= \mathbb{R}$ and $G=  \left\{ 0 \right\} $ are topological J-groups, we may assume that there is an element $w\in G\setminus  \left\{ 0 \right\} $ and there is a real number $\alpha\in \mathbb{R} \setminus G$. Now, the map
    \[
       \phi \colon G \rightarrow  \mathbb{Z} ,\ x \mapsto  \left\lfloor\frac{x-\alpha}{w} \right\rfloor\,,    
    \]
    where $\lfloor\cdot\rfloor$ is the \emph{floor function},
    satisfies the hypotheses of Proposition \ref{proposition_Z_times_group}(a). 
  \end{remark}
  \begin{question}
    Is it true that every subgroup of $ \mathbb{R}^2$ with the induced topology is a topological J-group?
    One would like to use a similar idea as in Remark \ref{remark_subgroups_of_R}, but the fact that there are proper and dense subgroups of $ \mathbb{R}^2$ which are connected (see e.g. \cite{Maehara1986}) makes it difficult to adjust the argument.
  \end{question}

  We will end this section with a proof of Theorem \ref{thm_TVS} stated in the introduction: Every topological vector space which satisfies one of the following properties is  a topological J-group:
  \begin{itemize}
   \item $E$ is locally convex.
   \item $E$ is paracompact.
   \item The topological dual of $E$ is not trivial.
  \end{itemize}
  \begin{proof}[Proof of Theorem \ref{thm_TVS}]
    If $E$ is paracompact, then the claim follows directly from Theorem \ref{thm_neeb}. If $E=  \left\{ 0 \right\} $, there is nothing to prove. If $E\neq  \left\{ 0 \right\} $ is locally convex, then by the Hahn-Banach-Theorem, the topological dual separates the points, so the topological dual is nontrivial.
    Finally, if the topological dual is nontrivial, there is a continuous linear functional $\phi\neq0$. This means that there is an element $w\in E$ such that $\phi(w)=1$. 
    The claim then follows from Proposition \ref{proposition_module_projection} applied to $\phi$.
  \end{proof}

 \section*{Acknowledgements}
  I want to thank Georg Frenck for a very fruitful discussion about homotopy groups during the conference \emph{Curvature and Global Shape 2021} in Münster/Germany.
  Furthermore I want to thank Dominik Bernhardt and Johannes Flake for helpful remarks on an earlier version of this article.
  Special thanks go to Karl-Hermann Neeb for Theorem \ref{thm_neeb}. Last but not least I want to thank my teacher Karl-Heinrich Hofmann who taught me the beauty of the theory of topological groups.
  \phantomsection
  \addcontentsline{toc}{section}{References}
  \bibliographystyle{new}
  \bibliography{Literatur}

\end{document}